\documentclass[11pt,reqno]{amsart}

\usepackage{a4wide}
\usepackage{amsmath,amssymb}
\usepackage{bbm}
\usepackage{xcolor}
\usepackage{latexsym}
\usepackage{tikz-cd}

\allowdisplaybreaks

\usepackage{scalerel,stackengine}
\stackMath
\newcommand\reallywidehat[1]{%
\savestack{\tmpbox}{\stretchto{%
  \scaleto{%
    \scalerel*[\widthof{\ensuremath{#1}}]{\kern-.6pt\bigwedge\kern-.6pt}%
    {\rule[-\textheight/2]{1ex}{\textheight}}%WIDTH-LIMITED BIG WEDGE
  }{\textheight}%
}{0.5ex}}%
\stackon[1pt]{#1}{\tmpbox}%
}

\newcommand\reallywidecheck[1]{%
\savestack{\tmpbox}{\stretchto{%
  \scaleto{%
    \scalerel*[\widthof{\ensuremath{#1}}]{\kern-.6pt\bigwedge\kern-.6pt}%
    {\rule[-\textheight/2]{1ex}{\textheight}}%WIDTH-LIMITED BIG WEDGE
  }{\textheight}%
}{0.5ex}}%
\stackon[1pt]{#1}{\scalebox{-1}{\tmpbox}}%
}

\numberwithin{equation}{section}

% \textwidth 15.3cm
%topmargin -1cm
%\evensidemargin 0.5cm
% \oddsidemargin 0.5cm

%%%

\newcommand{\Z}{{\mathbb Z}}

\newcommand{\R}{{\mathbb R}}
\newcommand{\N}{{\mathbb N}}
\newcommand{\CC}{{\mathbb C}}

\newcommand{\supp}{{\mbox{supp}}}

\newcommand{\mc}{\mathcal}

\newcommand{\dd}{\mbox{d}}

\newcommand{\eps}{\varepsilon}
\newcommand{\cM}{{\mathcal M}}
\newcommand{\cL}{{\mathcal L}}

\newcommand{\cF}{{\mathcal F}}

\newcommand{\SAP}{\mathcal{S}\hspace*{-2pt}\mathcal{AP}}

\newcommand{\NAP}{\mathcal{N}\hspace*{-1pt}\mathcal{AP}}

\newcommand{\lm}{\ensuremath{\lambda\!\!\!\lambda}}

\newcommand{\Cu}{C_{\mathsf{u}}}
\newcommand{\Cc}{C_{\mathsf{c}}}
\newcommand{\Cz}{C^{}_{0}}

\newcommand{\limn}{\lim_{n\to\infty}}
\newcommand{\lims}{\limsup_{n\to\infty}}

%\newcommand{\diam}{{\rm diam}}

%%%

\newcommand{\exend}{\hfill$\Diamond$}

\theoremstyle{plain}
\newtheorem{theorem}{Theorem}[section]
\newtheorem{prop}[theorem]{Proposition}
\newtheorem{lemma}[theorem]{Lemma}
\newtheorem{coro}[theorem]{Corollary}

\theoremstyle{definition}
\newtheorem{definition}[theorem]{Definition}
\newtheorem{remark}[theorem]{Remark}
\newtheorem{example}[theorem]{Example}

\begin{document}
\title[Norm almost periodic measures]{On norm almost periodic measures}

\author{Timo Spindeler}
\address{Department of Mathematical and Statistical Sciences, \newline
\hspace*{\parindent}632 CAB, University of Alberta, Edmonton, AB, T6G 2G1, Canada}
\email{spindele@ualberta.ca}

\author{Nicolae Strungaru}
\address{Department of Mathematical Sciences, MacEwan University \newline
\hspace*{\parindent} 10700 -- 104 Avenue, Edmonton, AB, T5J 4S2, Canada\\
and \\
Institute of Mathematics ``Simon Stoilow''\newline
\hspace*{\parindent}Bucharest, Romania}
\email{strungarun@macewan.ca}
\urladdr{http://academic.macewan.ca/strungarun/}

\begin{abstract}
In this paper, we study norm almost periodic measures on locally compact Abelian groups. First, we show that the norm almost periodicity of $\mu$ is equivalent to the equi-Bohr almost periodicity of $\mu*g$ for all $g$ in a fixed family of functions. Then, we show that, for absolutely continuous measures, norm almost periodicity is equivalent to the Stepanov almost periodicity of the Radon--Nikodym density.
\end{abstract}

\keywords{Almost periodic measures, Lebesgue decomposition}

\subjclass[2010]{43A05, 43A25, 52C23}

\maketitle

\section{Introduction}

The discovery of quasicrystals in Nature \cite{She} emphasised the need for a better understanding of physical diffraction, especially for systems with pure point spectrum. Over the last two decades, tremendous amount of work has been done in this direction, and the connection between pure point diffraction and almost periodicity has become clear (see for example \cite{bm,BL,Gou,LMS,LR,LSS,LS,LS2,MoSt,SOL,SOL1,NS11,ST} to name a few).

Given a translation bounded measure $\omega$, its diffraction is defined as the Fourier transform $\widehat{\gamma}$ of the autocorrelation measure $\gamma$ \cite{Hof1} (see the monographs \cite{TAO,TAO2,KLS} for general background of mathematical diffraction theory). We say that $\omega$ is pure point diffractive if the diffraction measure $\widehat{\gamma}$ is a pure point measure, and this is equivalent to the strong almost periodicity of the autocorrelation measure $\gamma$ \cite{bm,ARMA,MoSt}. For measures with Meyer set support, this is also equivalent to the (simpler to check) norm almost periodicity of $\gamma$ \cite{bm,NS11}. This makes strong and norm almost periodicity interesting for us.

While strong almost periodicity seems to be the natural concept to study due to the direct connection with pure point diffraction, norm almost periodicity appeared in a natural way in the study of measures coming from cut and project schemes \cite{NS11}, and diffraction of measures with Meyer set support \cite{NS12}. Because of this, a better understanding of norm almost periodicity becomes important. It is known that norm almost periodicity is a stronger concept than strong almost periodicity \cite{bm}, and that for measures with Meyer set support the two concepts are equivalent \cite{bm,NS11}. This suggests that there is a deeper connection between these two concepts, a connection which has not been investigated, yet. It is our goal in this paper to look closer at the relation between these two forms of almost periodicity.

Recall that a translation bounded measure $\mu$ is called strongly almost periodic if, for each compactly supported continuous function $f$, the convolution $\mu*f$ is a Bohr almost periodic function. In Theorem~\ref{thm:char_nap}, we prove that a translation bounded measure $\mu$ is norm almost periodic if and only if the set $\{ \mu *f\ |\  f \mbox{ continuous},\, \| f \|_\infty \leqslant 1,\, \supp(f) \subseteq U \}$, where $U$ is a fixed but arbitrary precompact open set, is equi-Bohr almost periodic (meaning that, for each $\eps >0$, the set of common $\eps$-almost periods of the entire family is relatively dense).
To achieve this characterisation, we provide in Corollary~\ref{C1} and Proposition~\ref{P1} new formulas for $\| \mu \|_U$. We want to emphasise here that, while in the literature this norm is typically defined using a compact set $K$ with non-empty interior, our choice of working with precompact open sets leads to simpler and more useful formulas (see Corollary~\ref{C1} and Proposition~\ref{P1}), and therefore it is, in our opinion, more useful. Moreover, any two precompact sets $X,Y$ with non-empty interior define equivalent norms $\| \cdot \|_X$ and $\| \cdot \|_Y$, respectively, and therefore the choice of a compact set $K$ with non-empty interior or a precompact open set $U$ is irrelevant for the concept of norm almost periodicity.

The second goal of the paper is to study the norm almost periodicity of absolutely continuous measures. We show that, given an absolutely continuous measure $\mu$ with density function $f \in L^1_{\text{loc}}(G)$, the measure $\mu$ is norm almost periodic if and only if $f$ is a Stepanov almost periodic function. We also prove that if the density function is uniformly continuous and bounded, then norm almost periodicity of $\mu$ is also equivalent to the Bohr almost periodicity of $f$ and to the strong almost periodicity of $\mu$.

\medskip

The paper is structured as follows. In Section~\ref{on norm}, we provide in Corollary~\ref{C1}, Proposition~\ref{P1} and Corollary~\ref{C4} various estimates for the norm of a measure. We also prove that the spaces of translation bounded pure point measures, translation bounded absolutely continuous measures and translation bounded singularly continuous measures, respectively, are Banach spaces with respect to this norm. We complete this section by showing that these spaces are not closed with respect to the product topology.

In Section~\ref{SAP vs NAP} we study the connection between norm and strong almost periodicity we mentioned above. We prove one of the main results of the paper in the following Theorem.

\medskip

\noindent \textbf{Theorem~\ref{thm:char_nap}.}
\textit{Let $\mu \in \cM^\infty(G)$, let $U\subseteq G$ be an open precompact set, and let $\mathcal F \subseteq \cF_U$ be dense in $(\cF_U, \| \cdot \|_\infty)$. Then, $\mu$ is norm almost periodic if and only if $\mathcal{G}_{\cF}:=\{ \mu *g\ |\ g \in \cF \}$ is equi-Bohr almost periodic.}

\textit{In particular, $\mu$ is norm almost periodic if and only if the family $\mathcal{G}:=\{ \mu *g\ |\ g \in \cF_U \}$ is equi-Bohr almost periodic.}

\smallskip Here and below, for a precompact open set $U$, the set $\cF_U$ is defined as
\[
\cF_U:= \{g \in \Cc(G)\ |\ |g| \leqslant 1_U \}= \{ g \in \Cc(G)\ |\ \supp(g) \subseteq U,\, \| g\|_\infty \leqslant 1 \} \,.
\]

After that we provide examples of measures $\mu \in \SAP(G)$ for which $ \mu_{\text{pp}}, \mu_{\text{ac}}$ and/or $\mu_{\text{sc}}$ are not strongly almost periodic, see Section~\ref{sap leb}. This is interesting, since norm almost periodicity carries throughout the Lebesgue decomposition by Corollary~\ref{coro:2}.

\smallskip

In Section~\ref{nap sp}, we take a closer look at norm almost periodic measures of spectral purity. Of special interest to us are norm almost periodic absolutely continuous measures. Here we prove the second main result in the paper.

\medskip

\noindent \textbf{Theorem~\ref{T1}.}
\textit{An absolutely continuous translation bounded measure $\mu=f\,\theta_G$ is norm almost periodic if and only if its density function $f \in L^1_{\text{loc}}(G)$ is $L^1$-Stepanov almost periodic.}

\textit{The mapping $f \mapsto f\, \theta_G$ is a norm preserving isomorphism between the Banach spaces $(\mathcal{S}, \| \cdot \|_U)$ and $(\NAP_{\text{ac}}(G), \| \cdot \|_U)$, where
\[
\mathcal{S}:= \{ f \in L^1_{\text{loc}}(G) \ |\ f \mbox{ is } L^1 \mbox{-Stepanov almost periodic} \} \,.
\]}

We complete the paper by looking at some consequences of these results for the diffraction of measures with Meyer set support.

\section{Preliminaries}
Throughout the paper, $G$ denotes a second countable, locally compact (Hausdorff) Abelian group. The metric on $G$ can be chosen such that it is translation invariant and all the balls are precompact \cite{STRUB}, and we assume that this holds. The associated Haar measure is denoted by $|\cdot|$ or $\theta_G$.

We use the familiar symbols $C_{\text{c}}(G)$ and $C_{\text{u}}(G)$ for the spaces of compactly supported continuous and bounded uniformly continuous functions, respectively, which map from $G$ to $\CC$. For any function $g$ on $G$, the functions $T_tg$ and $g^{\dagger}$ are defined by
\begin{displaymath}
(T_tg)(x):=g(x-t)\quad \text{ and } \quad g^{\dagger}(x):=g(-x).
\end{displaymath}
A \textbf{measure} $\mu$ on $G$ is a linear functional on $C_{\text{c}}(G)$ such that, for every compact subset $K\subseteq G$, there is a constant $a_K>0$ with
\begin{displaymath}
|\mu(g)| \leqslant a_{K}\, \|g\|_{\infty}
\end{displaymath}
for all $g\in C_{\text{c}}(G)$ with $\supp(g) \subseteq K$. Here, $\|g\|_{\infty}$ denotes the supremum norm of $g$. By the Riesz Representation theorem \cite{Reiter,ReiterSte,RUD2}, this definition is equivalent to the classical measure theory concept of regular Radon measure.

For a measure $\mu$ on $G$, we define  $T_t\mu$ and $\mu^{\dagger}$ by
\begin{displaymath}
(T_t\mu)(g):= \mu(T_{-t}g)\quad  \text{ and } \quad
\mu^{\dagger}(g):= \mu(g^{\dagger}).
\end{displaymath}
\smallskip

Given a measure $\mu$, there exists a positive measure $\left| \mu \right|$ such that, for all $f \in \Cc(G)$ with $f \geqslant 0$, we have \cite{Ped}
\[
\left| \mu \right| (f)= \sup \{ \left| \mu (g) \right| \ |\ g \in \Cc(G),\,  |g| \leqslant f \} \,.
\]
The measure $\left| \mu \right|$ is called the \textbf{total variation of} $\mu$.

\smallskip

Recall that a measure $\mu$ on $G$ is called \textbf{translation bounded} if $\sup_{t\in G}|\mu|(t+K) < \infty$ holds for every compact subset $K\subseteq G$. The space of all translation bounded measures on $G$ is denoted by $\mc{M}^{\infty}(G)$. We will denote by $\cM^\infty_{\text{pp}}(G), \cM^\infty_{\text{ac}}(G)$ and $\cM^\infty_{\text{sc}}(G)$ the spaces of translation bounded pure point, translation bounded absolutely continuous and translation bounded singular continuous measures, respectively.

\medskip

Now, as mentioned in the Introduction, there are different notions of almost periodicity.

\begin{definition} \label{def:1}
A function $f \in C_{\text{u}}(G)$ is called \textbf{strongly} \textbf{almost periodic} if the closure of $\{T_tf\ |\ t\in G\}$ is compact in the Banach space $(\Cu(G), \| \cdot \|_\infty)$. The spaces of strongly almost periodic functions on $G$ is denoted by $\operatorname{SAP}(G)$.
\end{definition}

\begin{remark}
Note that a function $f\in C_{\text{u}}(G)$ is strongly almost periodic if and only if it is Bohr almost periodic, i.e. for each $\eps>0$, the set
\begin{displaymath}
\{t\in G\ |\ \|T_tf-f\|_{\infty} < \eps\}
\end{displaymath}
is relatively dense \cite[Prop. 4.3.2]{TAO2}.  \exend
\end{remark}

Definition~\ref{def:1} carries over to measures.

\begin{definition}
A measure $\mu$ is called \textbf{strongly almost periodic} if, for all $f \in C_{\text{c}}(G)$, the function $f*\mu$ is a strongly almost periodic function. We will denote by $\SAP(G)$ the space of all strongly almost periodic measures.
\end{definition}

Later, we will compare this notion of almost periodicity with the following stronger version.

\begin{definition}
Let $K\subseteq G$ be a compact subset with non-empty interior. A measure $\mu\in\mc{M}^{\infty}(G)$ is called \textbf{norm almost periodic} if, for all $\varepsilon > 0$, the set
\[
P_{\varepsilon}^K(\mu) := \{t \in G \ |\ \|\mu -T_t\mu\|_K < \varepsilon\}
\]
is relatively dense in $G$. The space of norm almost periodic measures will be denote by $\NAP(G)$. Here, for a translation bounded measure $\nu \in \cM^\infty(G)$, its $K$-norm (see \cite{bm,NS11} for more details and properties of this) is defined as
\[
\| \nu \|_K:= \sup_{x \in G} \left| \nu \right|(x+K) \,.
\]
\end{definition}

Last but not least, we need to define the convolution of two measures.

\begin{definition}
Let $\mu$ and $\nu$ be two measures on $G$. We say that $\mu$ and $\nu$ are \textbf{convolvable} whenever their \textbf{convolution}
\[
(\mu*\nu)(f) = \int_{G} \int_{G} f(x+y)\ \dd\mu(x)\, \dd\nu(y)
\]
exists for all $f\in C_{\text{c}}(G)$.
\end{definition}

\begin{definition} A sequence $(A_n)_{n\in\N}$ of precompact open subsets of $G$ is called a \textbf{van Hove sequence} if, for each compact set $K \subseteq G$, we have
\[
\lim_{n\to\infty} \frac{|\partial^{K} A_{n}|}{|A_{n}|}  =  0 \, ,
\]
where the \textbf{$K$-boundary $\partial^{K} A$} of an open set $A$ is defined as
\[
\partial^{K} A := \bigl( \overline{A+K}\setminus A\bigr) \cup
\bigl((\left(G \backslash A\right) - K)\cap \overline{A}\, \bigr) \,.
\]
\end{definition}

Note that every $\sigma$-compact locally compact Abelian group $G$ admits van Hove sequences \cite{Martin2}.

At the end of this section, let us review the standard notions of convergence for measures which we will use below.

\begin{definition}
Let $(\mu_n)_{n\in\N}$ be a sequence of measures on $G$, and let $\mu\in\cM(G)$. Then, the sequence $(\mu_n)_{n\in\N}$ converges to $\mu$
\begin{enumerate}
\item[$\bullet$] in the \textbf{vague topology} if $\lim_{n\to\infty} \mu_n(f)=\mu(f)$ for all $f\in \Cc(G)$;
\item[$\bullet$] in the \textbf{norm topology} if $\lim_{n\to\infty} \|\mu_n-\mu\|_K=0$ for some (fixed) non-empty and compact set $K\subseteq G$ which is the closure of its interior;
\item[$\bullet$] in the \textbf{product topology} if $\lim_{n\to\infty} \|(\mu_n-\mu)*g\|_{\infty}=0$ for all $g\in \Cc(G)$.
\end{enumerate}
These types of convergence are denoted by $\mu_n\to\mu$, $\mu_n\Rightarrow\mu$, $\mu_n\xrightarrow{\pi}\mu$.
\end{definition}

\section{On the norm of measures}\label{on norm}

In this section, we give various estimates on the norm $\| \mu \|_U$ of a measure. Let us start with the following lemma.

\begin{lemma} \label{lem:1}
Let $U$ be an open precompact set, and let $\mu$ a measure on $G$. Then,
\begin{displaymath}
\left| \mu \right| (U)= \sup \{ | \mu (g)|\ |\ g \in \Cc(G) ,\, |g| \leqslant 1_U \}.
\end{displaymath}
\end{lemma}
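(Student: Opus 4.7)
Denote the right-hand side by $S$, so the goal is the two-sided inequality $S = |\mu|(U)$. The upper bound $S \leqslant |\mu|(U)$ is immediate from the definition of the total variation recalled in the Preliminaries: for any $g \in \Cc(G)$ with $|g| \leqslant 1_U$, the function $|g|$ is a continuous, compactly supported nonnegative function bounded above by $1_U$, so $|\mu(g)| \leqslant \int |g|\, d|\mu| \leqslant |\mu|(U)$, and taking the supremum over $g$ yields $S \leqslant |\mu|(U)$. The substance of the lemma is the reverse inequality.

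For the lower bound, the plan is to exploit inner regularity of $|\mu|$ (as a Radon measure on the locally compact Hausdorff group $G$) together with a Urysohn approximation. Concretely, I would first write
\[
|\mu|(U) \;=\; \sup\{|\mu|(K) \ |\ K \subseteq U \text{ compact}\},
\]
and then for each such compact $K \subseteq U$, use local compactness to interpose a precompact open set $V$ with $K \subseteq V \subseteq \overline{V} \subseteq U$, and Urysohn's lemma to produce $f_K \in \Cc(G)$ satisfying $0 \leqslant f_K \leqslant 1$, $f_K \equiv 1$ on $K$, and $\supp(f_K) \subseteq U$. In particular, $f_K \leqslant 1_U$ pointwise.

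The final step is to apply the definition of $|\mu|$ on the continuous test function $f_K$. On one hand,
\[
|\mu|(K) \;\leqslant\; |\mu|(f_K) \;=\; \sup\{|\mu(g)| \ |\ g \in \Cc(G),\, |g| \leqslant f_K\}.
\]
On the other hand, every $g$ admissible in this supremum also satisfies $|g| \leqslant 1_U$, so $|\mu(g)| \leqslant S$, giving $|\mu|(f_K) \leqslant S$. Chaining these and taking the supremum over compact $K \subseteq U$ yields $|\mu|(U) \leqslant S$, completing the proof.

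I expect no serious obstacle here; the only mild point of care is ensuring that the Urysohn function $f_K$ genuinely satisfies $f_K \leqslant 1_U$ (equivalently, vanishes off $U$), which is why I insert the intermediate precompact open set $V$ with $\overline{V} \subseteq U$ before invoking Urysohn. Once that is set up, the argument is a direct combination of inner regularity and the given variational characterisation of $|\mu|$ on $\Cc(G)_{\geqslant 0}$.
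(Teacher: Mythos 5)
Your proposal is correct and follows essentially the same route as the paper's proof: the easy bound via $|\mu(g)|\leqslant|\mu|(|g|)\leqslant|\mu|(1_U)$, and the reverse bound via inner regularity of $|\mu|$, a Urysohn function $f$ with $1_K\leqslant f\leqslant 1_U$, and the variational definition of $|\mu|(f)$ for nonnegative $f\in\Cc(G)$. The only difference is presentational (you chain suprema where the paper tracks an explicit $\eps/2$ budget), and your care about interposing $V$ with $\overline{V}\subseteq U$ to guarantee $\supp(f)\subseteq U$ is a sound way to justify the step the paper states without detail.
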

\begin{proof}
$\geqslant $: First, for any such $g$, we have
\begin{displaymath}
\left| \mu (g) \right| \leqslant \left| \mu \right| (|g|) \leqslant \left| \mu \right| (1_U) = \left| \mu \right| (U) \,.
\end{displaymath}

\medskip

\noindent $\leqslant$: Let $\varepsilon >0$ by arbitrary.
By the inner regularity of $|\mu|$, there exists a compact set $K \subseteq U$ such that
\begin{displaymath}
\left| \mu \right| (U) \leqslant \left| \mu \right| (K) +\frac{\varepsilon}{2} \,.
\end{displaymath}
Next, we can find some $f \in \Cc(G)$ such that $1_K \leqslant f \leqslant 1_U$, and hence
\begin{displaymath}
\left| \mu \right| (U) \leqslant \left| \mu \right| (f) +\frac{\varepsilon}{2} \,.
\end{displaymath}
Now, since $f \geqslant 0$, we have
\begin{displaymath}
\left| \mu \right| (f)= \sup\{ | \mu (g)|\ |\ g \in \Cc(G),\, |g| \leqslant f \} \,.
\end{displaymath}
Therefore, there exists a function $g \in \Cc(G)$ such that $|g| \leqslant f$ and
\begin{displaymath}
\left| \mu \right| (f)\leqslant  \left| \mu (g) \right| +\frac{\varepsilon}{2} \,.
\end{displaymath}
Thus, one has
\[
 \left| \mu (g) \right| \geqslant \left| \mu \right| (f) -\frac{\eps}{2} \geqslant \left| \mu \right| (U) -\eps \,,
\]
and $|g| \leqslant f \leqslant 1_U$. Since $\eps>0$ was arbitrary, this proves the claim.
\end{proof}

As we will often deal with functions of this type, we will use the following notation:
\[
\cF_U:= \{g \in \Cc(G)\ |\ |g| \leqslant 1_U \}= \{ g \in \Cc(G)\ |\ \supp(g) \subseteq U,\, \| g\|_\infty \leqslant 1 \} \,.
\]
As a consequence we get the following simple result, which will be important in our study of norm almost periodicity.

\begin{coro}\label{C1}
Let $U \subseteq G$ be an open and precompact subset. Then, for all $ \mu \in \cM^\infty(G)$, we have
\begin{displaymath}
\| \mu \|_U = \sup_{t \in G} \sup_{ g \in \cF_U } \left| \mu(T_t g) \right| = \sup_{ (t,g) \in G \times \cF_U} \left| \mu(T_t g) \right| = \sup_{ g \in \cF_U } \sup_{t \in G} \left| \mu(T_t g) \right|.
\end{displaymath}
\end{coro}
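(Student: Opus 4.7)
The plan is to reduce the claim directly to Lemma~\ref{lem:1} by a translation argument, then simply reorder suprema.

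First, I would observe that for every $x \in G$, the set $x + U$ is again open and precompact, so Lemma~\ref{lem:1} applies to $x+U$ in place of $U$. This gives
\[
|\mu|(x+U) = \sup\{|\mu(g)| \ |\ g \in \Cc(G),\ |g| \leqslant 1_{x+U}\}.
\]
The next step is to rewrite the indexing set of this supremum in terms of $\cF_U$. The key identity is that the map $h \mapsto T_x h$ is a bijection between $\cF_U$ and $\{g \in \Cc(G) \ |\ |g| \leqslant 1_{x+U}\}$: if $h \in \cF_U$, then $|T_x h|(y) = |h(y-x)| \leqslant 1_U(y-x) = 1_{x+U}(y)$, and conversely if $|g| \leqslant 1_{x+U}$ then $T_{-x}g \in \cF_U$. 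Substituting $g = T_x h$ therefore yields
\[
|\mu|(x+U) = \sup_{h \in \cF_U} |\mu(T_x h)|.
\]

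The final step is to take the supremum over $x \in G$. Using the definition $\|\mu\|_U = \sup_{x \in G} |\mu|(x+U)$, one obtains
\[
\|\mu\|_U = \sup_{x \in G} \sup_{h \in \cF_U} |\mu(T_x h)|,
\]
which is the first of the three equalities. The remaining two equalities are the standard fact that the supremum of a nonnegative function over a product set equals the iterated supremum taken in either order.

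There is no real obstacle here: the proof is essentially a translation of Lemma~\ref{lem:1} (in the literal sense of shifting by $x$), together with an interchange of suprema. The only point to mention carefully is that $T_x$ maps $\Cc(G)$ into itself and preserves the pointwise bound $|h| \leqslant 1_U$ into $|T_x h| \leqslant 1_{x+U}$, which secures the bijection used above.
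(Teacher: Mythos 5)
Your proof is correct and takes essentially the same route as the paper's: the paper's own proof just cites Lemma~\ref{lem:1} for the first equality and ``standard properties of the supremum'' for the other two, and your translation argument (applying Lemma~\ref{lem:1} to $x+U$ and identifying $\{g \in \Cc(G) \mid |g| \leqslant 1_{x+U}\}$ with $T_x\cF_U$) is precisely the detail the paper leaves implicit. Nothing further is needed.
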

\begin{proof}
The first equality follows from Lemma~\ref{lem:1}. The second and third equality follow from standard properties of the supremum.
\end{proof}

We next show that each measure $\mu$ induces an operator $T_\mu$ on the space of continuous functions supported inside $-U$, and that $\| \mu \|_U$ is just the operator norm $\| T_{\mu}\|$. This enables us to give alternate formulas for $\| \mu \|_U$.
For simplicity, we write
\begin{displaymath}
C(G:U):= \{ f \in \Cc(G)\ |\ \supp(f) \subseteq U \} \,.
\end{displaymath}

\begin{prop}\label{P1}
Let $\mu \in \cM^\infty(G)$, and let $U$ be an open precompact set. Define the operator $T_{\mu}$ by
\begin{displaymath}
T_\mu: (C(G:-U), \| \cdot \|_\infty) \to (\Cu(G), \| \cdot \|_\infty) , \quad \
f \mapsto\mu*f \,.
\end{displaymath}
Then, one has
\begin{displaymath}
\|T_\mu\| = \| \mu \|_U \,.
\end{displaymath}
In particular, this gives
\begin{align*}
\| \mu \|_U
    &= \sup \{ \| \mu*f \|_\infty\ |\ f \in C(G:-U),\, \| f\|_\infty =1 \} \\
    &= \sup \{ \| \mu*f \|_\infty\ |\ f \in C(G:-U),\, \| f\|_\infty
         \leqslant 1 \}\\
    &= \sup \{ \frac{\| \mu*f \|_\infty}{ \|f \|_\infty}\ |\ f \in C(G:-U),\, f
       \not\equiv 0 \} \\
    &= \inf \{ C\ |\ \| \mu*f \|_\infty \leqslant C\, \|f\|_\infty \,
        \text{ for all } f\in C(G:-U) \} \,.
\end{align*}
\end{prop}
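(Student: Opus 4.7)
The plan is to identify the operator $T_\mu$ via the convolution-evaluation duality, reduce the operator norm to a double supremum over the unit ball and over $G$, and then match that supremum with the formula from Corollary~\ref{C1} by means of the involution $f \mapsto f^\dagger$.

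First I would verify that $T_\mu$ is well defined. For $f \in C(G:-U)$, the function $T_x f^\dagger$ has support contained in the compact set $x+\overline{U}$, so $\mu(T_x f^\dagger)$ is a finite complex number for every $x \in G$, and a direct change of variable in $(\mu*f)(x) = \int f(x-y)\, d\mu(y)$ yields the pointwise identity
\[
(\mu * f)(x) = \mu(T_x f^\dagger).
\]
Since $\mu \in \cM^\infty(G)$ and $f$ is uniformly continuous with compact support, $\mu*f$ is bounded and uniformly continuous (this is the standard fact that convolving a translation bounded measure with a $\Cc(G)$ function lands in $\Cu(G)$; translation boundedness gives boundedness, and uniform continuity follows from that of $f^\dagger$), so $T_\mu$ maps into $\Cu(G)$ as claimed.

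Next I would note that the map $f \mapsto f^\dagger$ is an involutive bijection between the closed unit ball
\[
B:= \{ f \in C(G:-U) \mid \|f\|_\infty \leqslant 1 \}
\]
and the set $\cF_U = \{ g \in \Cc(G) \mid |g| \leqslant 1_U \}$: indeed, $\supp(f) \subseteq -U$ iff $\supp(f^\dagger) \subseteq U$, and the sup-norm is preserved. Using the pointwise identity together with this bijection,
\[
\|T_\mu\|
= \sup_{f \in B} \|\mu*f\|_\infty
= \sup_{f \in B} \sup_{x \in G} |\mu(T_x f^\dagger)|
= \sup_{g \in \cF_U} \sup_{x \in G} |\mu(T_x g)|,
\]
and by Corollary~\ref{C1} the right-hand side equals $\|\mu\|_U$. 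This proves the central identity $\|T_\mu\| = \|\mu\|_U$.

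Finally, the four alternative formulas listed in the statement are just the standard reformulations of the operator norm of a bounded linear operator between normed spaces (supremum over the unit sphere, the unit ball, the normalised quotient, and infimum over admissible constants). Each follows from the identity $\|T_\mu\| = \|\mu\|_U$ by elementary properties of suprema and by linearity of $T_\mu$ (so that scaling $f$ scales $\|\mu*f\|_\infty$ proportionally). The only point that requires attention is that the supremum over the unit ball agrees with the supremum over the unit sphere; this is where linearity of $T_\mu$ is used, together with the fact that $C(G:-U)$ contains nonzero elements so the ratio formulation is meaningful. The main (mild) obstacle is bookkeeping: getting the sign/involution right in the pointwise identity $(\mu*f)(x)=\mu(T_x f^\dagger)$ and confirming that the bijection $f\mapsto f^\dagger$ maps $B$ exactly onto $\cF_U$; everything else is an appeal to Corollary~\ref{C1} and standard operator-norm manipulations.
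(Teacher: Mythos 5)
Your proposal is correct and follows essentially the same route as the paper: both identify $(\mu*f)(x)=\mu(T_xf^\dagger)$, use the isometry $f\mapsto f^\dagger$ to match the unit ball of $C(G:-U)$ with $\cF_U$ (the paper phrases this via $\cF_{-U}$), invoke Corollary~\ref{C1}, and dispatch the remaining formulas as standard operator-norm reformulations. No gaps.
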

\begin{proof}

First note that $g \mapsto g^\dagger$ defines an isometric isomorphism between $C(G:-U)$ and $C(G:U)$. It follows immediately from Corollary~\ref{C1} that
\begin{displaymath}
\| \mu \|_U
    =  \sup_{ g \in \cF_U }\sup_{t \in G} \big| \mu(T_t g) \big|  =
        \sup_{ g \in \cF_{-U} } \sup_{t \in G}  \big| \mu(T_t g^\dagger)
         \big|
    =\sup_{ g \in \cF_{-U} } \sup_{t \in G}  \big|(\mu*g)(t) \big|=
        \sup_{ g \in \cF_{-U} }   \|\mu*g \|_\infty \,.
\end{displaymath}
This yields
\begin{equation}\label{eq2}
\| \mu \|_U= \sup_{ g \in \cF_{-U} }   \|\mu*g \|_\infty \,.
\end{equation}
Now, since $\mu \in \cM^\infty(G)$, we have $\mu*g \in \Cu(G)$ for all $g \in \Cc(G)$ \cite{ARMA,MoSt}. Therefore, $T_\mu$ is well defined, and it is easy to see that $T_\mu$ is linear.

Next, we have
\begin{displaymath}
\cF_{-U} = \{ g \in \Cc(G)\ |\ \supp(g) \subseteq -U,\, \| g\|_\infty \leqslant 1 \} = \{ g \in C(G:-U)\ |\ \| g\|_\infty \leqslant 1 \}\,.
\end{displaymath}
Hence, $\cF_{-U}$ is the unit ball in the normed space $( C(G:-U), \| \cdot \|_\infty)$. Therefore, by Eq.~\eqref{eq2}, we get
\begin{displaymath}
\| \mu \|_U=\sup \{ \| T_{\mu}(f) \|_\infty\ |\ f \in C(G:-U),\, \| f\|_\infty \leqslant 1 \}= \| T_\mu\| \,.
\end{displaymath}

Finally, the last claim follows from standard equivalent definitions of the operator norm on normed spaces.
\end{proof}

As an immediate consequence, we obtain the next result.

\begin{coro}\label{C5} Let $U$ be an open precompact set, and let $\mathcal F \subseteq \mathcal F_{-U}$ be dense in $(\mathcal F_{-U}, \| \cdot \|_\infty)$. Then, one has
\begin{displaymath}
\| \mu \|_U= \sup \{ \| \mu*f \|_\infty\ |\ f \in \mathcal F \}
\end{displaymath}
\end{coro}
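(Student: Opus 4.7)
The plan is to deduce this directly from Proposition~\ref{P1}, which identifies $\| \mu \|_U$ with the operator norm of the convolution operator $T_\mu \colon (C(G:-U), \| \cdot \|_\infty) \to (\Cu(G), \| \cdot \|_\infty)$. Since $\mu \in \cM^\infty(G)$, this operator norm is finite, which supplies the Lipschitz estimate
\[
\| \mu*f - \mu*g \|_\infty = \| \mu*(f-g) \|_\infty \leqslant \| \mu \|_U \cdot \| f - g \|_\infty
\]
for all $f,g \in C(G:-U)$, and this is the only analytic ingredient needed beyond the density hypothesis.

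One direction is trivial: since $\cF \subseteq \cF_{-U}$, Proposition~\ref{P1} gives immediately
\[
\sup \{ \| \mu*f \|_\infty\ |\ f \in \cF \} \leqslant \sup \{ \| \mu*f \|_\infty\ |\ f \in \cF_{-U} \} = \| \mu \|_U.
\]

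For the reverse inequality, I would fix $g \in \cF_{-U}$ and $\eps > 0$ arbitrary, and use density to choose $f \in \cF$ with $\| f - g \|_\infty < \eps$. Both $f$ and $g$ have support in $-U$, so $f - g \in C(G:-U)$, and the Lipschitz estimate above yields
\[
\| \mu*g \|_\infty \leqslant \| \mu*f \|_\infty + \| \mu \|_U \cdot \eps \leqslant \sup_{h \in \cF} \| \mu*h \|_\infty + \| \mu \|_U \cdot \eps.
\]
Letting $\eps \to 0^+$ gives $\| \mu*g \|_\infty \leqslant \sup_{h \in \cF} \| \mu*h \|_\infty$, and taking the supremum over $g \in \cF_{-U}$ combined with Proposition~\ref{P1} concludes the proof.

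There is no real obstacle here; the only subtlety worth noting is that one must verify $f - g$ still belongs to $C(G:-U)$ in order to legitimately apply the operator-norm bound from Proposition~\ref{P1}, which holds because $\supp(f - g) \subseteq \supp(f) \cup \supp(g) \subseteq -U$. The argument is essentially the standard fact that a bounded linear operator on a normed space is determined by its values on any dense subset of the unit ball.
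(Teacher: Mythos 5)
Your proposal is correct and follows essentially the same route as the paper: both reduce the claim to Proposition~\ref{P1} and the standard fact that the operator norm of $T_\mu$ can be computed over any dense subset of the unit ball $\cF_{-U}$ of $C(G:-U)$. The paper simply cites this fact, whereas you write out the density/Lipschitz $\eps$-argument explicitly; the content is identical.
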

\begin{proof}
With the notation of Proposition~\ref{P1}, since $\mathcal F$ is dense in $(\mathcal F_{-U}, \| \cdot \|_\infty)$ and $(\mathcal F_{-U}, \| \cdot \|_\infty)$ is the unit ball in $(C(G:-U), \| \cdot \|_\infty)$, we get:
\begin{displaymath}
 \| \mu \|_U =\|T_\mu\| = \sup_{ f \in \mathcal F} \| T_\mu (f) \| =  \sup \{ \| \mu*f \|_\infty\ |\ f \in \mathcal F \} \,.      \qedhere
\end{displaymath}
\end{proof}

We next provide a similar estimate for the norm for compact sets, via approximations from above. Let us start with a preliminary lemma.

\begin{lemma}
Let $\mu$ be a positive measure, and let $B$ be a precompact Borel set. Then, we have
\begin{displaymath}
\mu(\overline B) = \inf\{ \mu(f)\ |\ f\in\Cc(G),\, f\geqslant 1_B\}.
\end{displaymath}
\end{lemma}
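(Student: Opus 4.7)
The plan is to prove the two inequalities separately, using only continuity, outer regularity of the Radon measure $\mu$, and Urysohn's lemma.

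For the inequality $\mu(\overline{B}) \leqslant \inf\{\mu(f) \mid f\in\Cc(G),\, f\geqslant 1_B\}$, I would fix any admissible $f \in \Cc(G)$ with $f \geqslant 1_B$ and first upgrade this to $f \geqslant 1_{\overline B}$. Given $x \in \overline B$, since $G$ is second countable, pick a sequence $x_n \in B$ with $x_n \to x$; then $f(x_n) \geqslant 1_B(x_n) = 1$, and continuity of $f$ gives $f(x) \geqslant 1$. Hence $f \geqslant 1_{\overline B}$ pointwise, and integrating against the positive measure $\mu$ yields $\mu(f) \geqslant \mu(\overline B)$. Taking the infimum over all such $f$ produces the desired inequality.

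For the reverse inequality, I would fix $\eps > 0$ and produce an $f \in \Cc(G)$ with $f \geqslant 1_B$ and $\mu(f) \leqslant \mu(\overline B) + \eps$. Since $\mu$ is a regular Radon measure and $\overline{B}$ is compact (being the closure of a precompact set), outer regularity gives an open set $V \supseteq \overline B$ with $\mu(V) \leqslant \mu(\overline B) + \eps$. By replacing $V$ with its intersection with any precompact open neighbourhood of $\overline B$, I may assume $V$ is precompact. Applying Urysohn's lemma in the locally compact Hausdorff group $G$ to the compact set $\overline B$ inside the open set $V$ furnishes $f \in \Cc(G)$ with $1_{\overline B} \leqslant f \leqslant 1_V$ and $\supp(f) \subseteq V$. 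Then $f \geqslant 1_B$ and $\mu(f) \leqslant \mu(1_V) = \mu(V) \leqslant \mu(\overline B) + \eps$, so the infimum on the right-hand side is at most $\mu(\overline B) + \eps$. Letting $\eps \downarrow 0$ closes the argument.

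No step here is really an obstacle; the only mild subtlety is the promotion $1_B \leqslant f \Rightarrow 1_{\overline B} \leqslant f$ for continuous $f$, which is what makes the statement about $\overline B$ (rather than $B$) natural, and the need to ensure the approximating open set $V$ can be chosen precompact so that Urysohn's lemma produces a compactly supported function.
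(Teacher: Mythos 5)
Your proposal is correct and follows essentially the same route as the paper: the upper bound comes from promoting $f\geqslant 1_B$ to $f\geqslant 1_{\overline B}$ by continuity, and the lower bound from outer regularity of $\mu$ at the compact set $\overline B$ combined with Urysohn's lemma. The only cosmetic difference is that you phrase the second half with an explicit $\eps$ rather than as a chain of infima, and your worry about choosing $V$ precompact is unnecessary since the locally compact version of Urysohn's lemma already yields a compactly supported function inside any open neighbourhood of a compact set.
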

\begin{proof}
On the one hand, we have
\begin{displaymath}
\mu(\overline B) = \mu(1_{\overline B}) \leqslant \mu(f)
\end{displaymath}
for all $f\in \Cc(G)$ with $f\geqslant 1_B$, since $f\in \Cc(G)$ and $f\geqslant 1_B$ imply $f\geqslant 1_{\overline B}$. Hence, we obtain $ \mu(\overline B) \leqslant \inf\{ \mu(f)\ |\ f\in\Cc(G),\, f\geqslant 1_B\}$.

On the other hand, we have

\begin{align*}
\mu(\overline B)
    &= \inf\{\mu(U)\ |\ \overline B\subseteq U,\, U \text{ open}\} =\inf\{ \mu(1_U)\ |\ \overline B\subseteq U,\, U \text{ open}\} \\
    &\geqslant \inf\{ \mu(f)\ |\ \overline B\subseteq U,\, U \text{ open},
       \, f\in \Cc(G),\, 1_U\geqslant f\geqslant 1_B\} \\
    &\geqslant \inf\{ \mu(f)\ |\ \, f\in \Cc(G),\,  f\geqslant 1_B\}\,.
\end{align*}
Therefore, the claim follows.
\end{proof}

Consequently, we get the following estimates.

\begin{coro} \label{coro:a}
Let $\mu$ be a positive measure, and let $K\subseteq G$ be a compact set. Then, we have
\begin{displaymath}
\mu(K) = \inf\{ \mu(f)\ |\ f\in\Cc(G),\, f\geqslant 1_K\}.
\end{displaymath}
\end{coro}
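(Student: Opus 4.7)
The plan is to derive Corollary~\ref{coro:a} as an immediate specialisation of the preceding lemma. The preceding lemma gives, for any positive measure $\mu$ and any precompact Borel set $B$, the equality
\[
\mu(\overline B) = \inf\{ \mu(f)\ |\ f\in\Cc(G),\, f\geqslant 1_B\}.
\]
So the first thing I would check is that a compact set $K \subseteq G$ qualifies as a precompact Borel set: compactness implies precompactness (indeed $\overline{K}=K$), and every compact subset of a Hausdorff space is closed, hence Borel. This lets us feed $B:=K$ into the lemma.

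Once this is verified, I would simply observe that $\overline{K}=K$, so the left-hand side of the lemma's identity becomes $\mu(K)$, and the right-hand side is exactly the infimum in the statement of the corollary. This yields
\[
\mu(K) = \mu(\overline K) = \inf\{ \mu(f)\ |\ f\in\Cc(G),\, f\geqslant 1_K\},
\]
which is the claim.

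There is really no obstacle here; the only small subtlety is making sure one is entitled to invoke the preceding lemma with $B=K$, i.e.\ verifying the precompactness and Borel hypotheses for compact sets. Both are immediate in a locally compact Hausdorff group, so the whole proof is a one-line application of the lemma, and I would present it in exactly that way.
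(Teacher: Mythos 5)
Your proof is correct and matches the paper's intent exactly: the paper states this corollary as an immediate consequence of the preceding lemma, obtained by taking $B=K$ and using $\overline K = K$. Your verification that a compact set is a precompact Borel set is the only (trivial) point to check, and you handle it properly.
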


The next corollary is an immediate consequence.

\begin{coro}\label{C4}
Let $\mu$ be a measure on $G$, and let $K\subseteq G$ be a compact set. Then, we have
\begin{displaymath}
\|\mu\|_K = \sup_{t\in G}\, \inf_{\substack{f\in \Cc(G),\\ f\geqslant 1_K}} \, \left| \mu \right| (T_tf) \,.
\end{displaymath}
In particular, if $\mu$ is positive, then we have
\begin{displaymath}
\|\mu\|_K = \sup_{t\in G}\, \inf_{\substack{f\in \Cc(G),\\ f\geqslant 1_K}} \,  \mu  (T_tf) \,.
\end{displaymath}
\end{coro}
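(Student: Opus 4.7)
The plan is to apply the preceding Corollary~\ref{coro:a} to the positive measure $|\mu|$ at each translate of $K$, and then take the supremum over $t \in G$.

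First I would fix $t \in G$ and apply Corollary~\ref{coro:a} to the positive measure $|\mu|$ and the compact set $t + K$. This gives
\[
|\mu|(t+K) = \inf\{\,|\mu|(h) \ |\ h \in \Cc(G),\ h \geqslant 1_{t+K}\,\}.
\]
The key bookkeeping step is then to reparametrise the infimum using the substitution $h = T_t f$. Since $T_t(1_K) = 1_{t+K}$ and $T_t$ is an order-preserving bijection of $\Cc(G)$ onto itself, the condition $h \geqslant 1_{t+K}$ is equivalent to $f \geqslant 1_K$. Hence
\[
|\mu|(t+K) = \inf\{\,|\mu|(T_t f) \ |\ f \in \Cc(G),\ f \geqslant 1_K\,\}.
\]

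Taking the supremum over $t \in G$ on both sides and recalling the definition $\|\mu\|_K = \sup_{t \in G} |\mu|(t+K)$ yields
\[
\|\mu\|_K = \sup_{t \in G} \inf_{\substack{f \in \Cc(G),\\ f \geqslant 1_K}} |\mu|(T_t f),
\]
which is the first claim. The second claim, for positive $\mu$, is then immediate since $|\mu| = \mu$ in that case.

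No real obstacle is expected: the statement is essentially just Corollary~\ref{coro:a} applied pointwise at every translate of $K$, together with a translation-invariance observation. The only small point to be careful about is verifying that the substitution $h \leftrightarrow T_t f$ genuinely preserves both the constraint set and the value $|\mu|(h) = |\mu|(T_t f)$ — here the equality is just a rewriting, not the translation invariance of $|\mu|$, since we are translating the test function rather than the measure.
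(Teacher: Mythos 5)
Your argument is correct and coincides with the paper's own proof: both apply Corollary~\ref{coro:a} to the positive measure $\left|\mu\right|$ on the translate $t+K$ and then reparametrise the infimum via $h = T_t f$ before taking the supremum over $t$. The substitution is handled properly (with the convention $(T_tf)(x)=f(x-t)$ one indeed has $T_t 1_K = 1_{t+K}$, so the constraint sets match), and nothing is missing.
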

\begin{proof}
This follows from Corollary~\ref{coro:a} because
\begin{displaymath}
\|\mu\|_K = \sup_{t\in G}\left|  \mu \right| (t+K) = \sup_{t\in G} \, \inf_{\substack{f\in
\Cc(G) ,\\ f\geqslant 1_{t+K}}} \, \left|\mu\right|(f) =  \sup_{t\in G} \, \inf_{\substack{f\in \Cc(G) ,\\ f\geqslant 1_K}} \, \left|\mu\right|(T_tf) \,.
\qedhere
\end{displaymath}
\end{proof}

\begin{remark} When working with precompact open sets, the formula of Corollary~\ref{C1} involves two suprema, which can be interchanged. In contrast, the supremum and infimum in Corollary~\ref{C4} cannot be interchanged. Because of this, it is much easier to work with open precompact sets when estimating $\| \mu \|_U$ than with compact sets, and this is why we make this choice below.    \exend
\end{remark}

Let us emphasise that our choice of open precompact sets does not matter when working with the norm topology. The following result is proved in \cite{bm} for compact sets and the same proof works for precompact sets.

\begin{lemma}\label{lem:2}
Let $A,B$ be precompact sets with non-empty interior. Then  $\| \cdot \|_{A}$ and $\| \cdot \|_{B}$ are equivalent norms on $\cM^\infty(G)$.
\end{lemma}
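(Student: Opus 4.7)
The plan is to establish two inequalities $\|\mu\|_A \leqslant C_1 \|\mu\|_B$ and $\|\mu\|_B \leqslant C_2 \|\mu\|_A$ via a simple compactness covering argument; by symmetry of the roles of $A$ and $B$, it suffices to explain one direction carefully. The key observation is that since $A$ is precompact, $\overline{A}$ is compact, and since $B$ has non-empty interior, we can cover the compact set $\overline{A}$ by finitely many translates of $B$. Then the subadditivity and monotonicity of the positive measure $|\mu|$ will give the required bound uniformly in translates.

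More precisely, I would first pick some $y \in \operatorname{Int}(B)$, so that $V := \operatorname{Int}(B) - y$ is an open neighborhood of $0$. For each $a \in \overline{A}$ one has $a \in (a - y) + \operatorname{Int}(B)$, so the family $\{(a - y) + \operatorname{Int}(B) : a \in \overline{A}\}$ is an open cover of $\overline{A}$. By compactness, there exist finitely many $a_1,\dots,a_n \in \overline{A}$ such that, setting $t_i := a_i - y$,
\[
\overline{A} \;\subseteq\; \bigcup_{i=1}^{n} (t_i + B) \,.
\]

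With this covering in hand, the main estimate is routine: for every $x \in G$,
\[
|\mu|(x + A) \;\leqslant\; |\mu|(x + \overline{A}) \;\leqslant\; \sum_{i=1}^{n} |\mu|\bigl((x + t_i) + B\bigr) \;\leqslant\; n\, \|\mu\|_B \,,
\]
and taking the supremum over $x \in G$ yields $\|\mu\|_A \leqslant n\, \|\mu\|_B$. Swapping the roles of $A$ and $B$, and using that $B$ is precompact and $A$ has non-empty interior, gives the reverse inequality $\|\mu\|_B \leqslant m\, \|\mu\|_A$ for some integer $m$.

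There is no real obstacle here: the only place where one has to be slightly careful is the choice of translates realising the cover, to ensure the argument works for arbitrary precompact sets (not just open ones) with non-empty interior — hence the use of $\overline{A}$ instead of $A$ itself on the one side, and $\operatorname{Int}(B)$ instead of $B$ on the other. The fact that the bounding constants $n, m$ are independent of $\mu$ is automatic from the covering, which makes the resulting inequalities valid on all of $\cM^\infty(G)$ and thus proves equivalence of the two norms.
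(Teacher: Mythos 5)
Your proof is correct and follows essentially the same route as the paper: cover each precompact set by finitely many translates of the other (using compactness of the closure and non-emptiness of the interior) and apply monotonicity and subadditivity of $|\mu|$ to get the two-sided bound. You simply spell out the covering argument that the paper leaves as ``straightforward''; the only detail you omit, which the paper notes in passing, is the observation that $\|\cdot\|_A$ is genuinely a norm (not just a seminorm) because translates of $\operatorname{Int}(A)$ cover $G$.
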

\begin{proof} It is obvious that $\| \cdot \|_{A}$ defines a semi-norm, and since it has non-empty interior it is a norm.

Now, since $A$ and $B$ are precompact and have non-empty interior, each set can be covered by finitely many translates of the other. Let $N$ be the number of translates needed for both coverings. Then, it is straightforward to see that
\begin{displaymath}
\frac{1}{N}\, \| \cdot \|_A \leqslant \| \cdot \|_B \leqslant N\, \| \cdot \|_A \,.          \qedhere
\end{displaymath}
\end{proof}

\smallskip

We complete the section by looking at the completion of various spaces of translation bounded measures with respect to norm and product topologies. First, let us recall the following result.

\begin{theorem}\label{cm Banach} \cite{CRS3}
Let $K \subseteq G$ be any compact set with non-empty interior. Then, the pair $(\cM^\infty(G), \| \cdot \|_K)$ is a Banach space.
\end{theorem}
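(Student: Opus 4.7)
By Lemma~\ref{lem:2}, the norms $\|\cdot\|_X$ arising from different precompact $X$ with non-empty interior are pairwise equivalent, so completeness with respect to $\|\cdot\|_K$ is equivalent to completeness with respect to $\|\cdot\|_U$ for any fixed precompact open set $U$; I therefore plan to work with $U$, since the clean formulas of Corollary~\ref{C1} and Proposition~\ref{P1} are available there. Fix a Cauchy sequence $(\mu_n)_n$ in $(\cM^\infty(G), \|\cdot\|_U)$. The strategy is to construct a candidate limit measure $\mu$ vaguely, verify that it lies in $\cM^\infty(G)$, and then upgrade the convergence to $\|\cdot\|_U$.

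The crucial estimate underlying everything is the following: for every $f \in \Cc(G)$ and every $\nu \in \cM^\infty(G)$,
\[
|\nu(f)| \leqslant N(f)\, \|\nu\|_U\, \|f\|_\infty\,,
\]
where $N(f)$ denotes the number of translates of $U$ needed to cover $\supp(f)$. To prove it, I cover $\supp(f)$ by translates $t_1+U,\ldots,t_{N(f)}+U$, choose a partition of unity $\{\phi_i\} \subseteq \Cc(G)$ subordinate to this cover, and write $f = \sum_i f\phi_i$. Each $f\phi_i$ has support in $t_i + U$, and because the norm $\|\cdot\|_U$ is translation invariant (which is immediate from Corollary~\ref{C1}), Proposition~\ref{P1} applied to the translated function yields $|\nu(f\phi_i)| \leqslant \|\nu\|_U \|f\phi_i\|_\infty \leqslant \|\nu\|_U \|f\|_\infty$; summing over $i$ gives the bound.

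Applying this estimate to $\nu = \mu_n - \mu_m$ shows that $(\mu_n(f))_n$ is Cauchy in $\CC$, so I may set $\mu(f) := \lim_n \mu_n(f)$. Linearity is inherited, and letting $m \to \infty$ in the same estimate gives $|\mu(f)| \leqslant N(f)\sup_n \|\mu_n\|_U \|f\|_\infty$, which makes $\mu$ a well-defined measure (Cauchy sequences are bounded, so $\sup_n \|\mu_n\|_U < \infty$). For translation boundedness, Corollary~\ref{C1} gives
\[
\| \mu \|_U \,=\, \sup_{(t,g)\in G \times \cF_U} |\mu(T_t g)| \,=\, \sup_{(t,g)} \lim_n |\mu_n(T_t g)| \,\leqslant\, \sup_n \|\mu_n\|_U \,<\, \infty\,.
\]
To finish, given $\eps > 0$ pick $N_0$ with $\|\mu_n - \mu_m\|_U < \eps$ for $n,m \geqslant N_0$. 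For $n \geqslant N_0$ and any $(t,g) \in G \times \cF_U$,
\[
|(\mu_n - \mu)(T_t g)| \,=\, \lim_m |(\mu_n - \mu_m)(T_t g)| \,\leqslant\, \limsup_m \|\mu_n - \mu_m\|_U \,\leqslant\, \eps\,,
\]
and taking the supremum over $(t,g)$ via Corollary~\ref{C1} yields $\|\mu_n - \mu\|_U \leqslant \eps$, which is the desired norm convergence.

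The main obstacle is the very first step, namely producing the quantitative inequality $|\nu(f)| \leqslant N(f)\|\nu\|_U\|f\|_\infty$ for arbitrary $f \in \Cc(G)$; once this covering/partition-of-unity bound is established, the remainder reduces to routine interchanges of limits with the suprema furnished by Corollary~\ref{C1}.
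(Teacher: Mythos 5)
The paper itself offers no proof of this theorem: it is imported verbatim from \cite{CRS3} (listed as ``in preparation''), so there is nothing internal to compare your argument against. Your proof is correct and is the natural self-contained argument: reduce to an open precompact $U$ via Lemma~\ref{lem:2}, establish the local bound $|\nu(f)|\leqslant N(f)\,\|\nu\|_U\,\|f\|_\infty$ by covering $\supp(f)$ with translates of $U$ and a partition of unity, use it to define the limit functional $\mu$ pointwise on $\Cc(G)$, check it is a translation bounded measure, and then upgrade to norm convergence by passing to the limit in $m$ inside the supremum of Corollary~\ref{C1}. Two small points of hygiene: first, when you bound $\|\mu\|_U$ for the candidate limit \emph{before} knowing $\mu\in\cM^\infty(G)$, you should invoke Lemma~\ref{lem:1} (valid for arbitrary measures, with the supremum possibly infinite) rather than Corollary~\ref{C1}, whose statement presupposes $\mu\in\cM^\infty(G)$; the identity $\sup_{t}|\mu|(t+U)=\sup_{(t,g)}|\mu(T_tg)|$ is what you actually need, and it holds in $[0,\infty]$. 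Second, for $\mu$ to be a measure in the paper's sense you need the constant $a_K$ to depend only on the compact set $K$, not on $f$; this is fine because $N(f)$ can be taken to be the number of translates of $U$ covering $K\supseteq\supp(f)$, but it is worth saying explicitly. Neither issue affects the validity of the argument.
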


Now, Lemma~\ref{lem:2} and Theorem~\ref{cm Banach} imply the next corollary.

\begin{coro}Let $U \subseteq G$ be any open precompact set. Then, $(\cM^\infty(G), \| \cdot \|_U)$ is a Banach space.
\end{coro}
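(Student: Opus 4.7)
The plan is to deduce this corollary directly from the two results cited immediately before it, using the standard fact that completeness is preserved under equivalence of norms. There is essentially no new content to prove, only a matching of hypotheses.

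First, I would note that an open precompact set $U$ (tacitly assumed non-empty, so that $\|\cdot\|_U$ is a genuine norm and not just a seminorm) is in particular a precompact set with non-empty interior, since $U$ is open. Next, I would choose any compact set $K \subseteq G$ with non-empty interior; for instance, $K = \overline{V}$ for some non-empty open precompact ball $V$. Such a $K$ exists because $G$ is locally compact, and it also qualifies as a precompact set with non-empty interior.

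Applying Lemma~\ref{lem:2} to the pair $(U,K)$ then yields constants $c_1, c_2 > 0$ such that
\[
c_1 \, \| \mu \|_K \leqslant \| \mu \|_U \leqslant c_2 \, \| \mu \|_K
\]
for every $\mu \in \cM^\infty(G)$. By Theorem~\ref{cm Banach}, the space $(\cM^\infty(G), \| \cdot \|_K)$ is a Banach space. Since equivalent norms produce the same Cauchy sequences and the same convergent sequences with the same limits, completeness transfers immediately: $(\cM^\infty(G), \| \cdot \|_U)$ is a Banach space as well.

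The main — and in fact only — obstacle is a bookkeeping one, namely verifying that Lemma~\ref{lem:2} applies in the form needed. That lemma is stated for precompact sets with non-empty interior, which covers both a compact set with non-empty interior (as in Theorem~\ref{cm Banach}) and a non-empty open precompact set (as in the corollary). So the hypotheses match without further work, and the proof reduces to invoking these two results in sequence.
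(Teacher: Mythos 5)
Your proposal is correct and follows exactly the route the paper intends: the text preceding the corollary states that it follows from Lemma~\ref{lem:2} and Theorem~\ref{cm Banach}, which is precisely the combination you use, together with the standard fact that completeness transfers across equivalent norms. Your added bookkeeping (checking that an open precompact set qualifies as a precompact set with non-empty interior, and picking a compact $K$ with non-empty interior to compare against) is exactly the implicit content of the paper's one-line justification.
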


\smallskip

We next show that the spaces of translation bounded measures of spectral purity are closed in $(\cM^\infty(G), \| \, \|_U)$ and hence Banach spaces. Let us start with the following result.

\begin{lemma}\label{lem:3}
For all $\alpha \in \{ pp, ac, sc \}$ and all $\mu \in \cM^\infty(G)$ we have
\begin{displaymath}
\| \mu_{\alpha} \|_U \leqslant \| \mu \|_U \leqslant \| \mu_{\text{pp}} \|_U +  \| \mu_{\text{ac}} \|_U+ \| \mu_{\text{sc}} \|_U \,.
\end{displaymath}
\end{lemma}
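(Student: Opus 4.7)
The plan is to derive both inequalities directly from the Lebesgue decomposition $\mu = \mu_{\text{pp}} + \mu_{\text{ac}} + \mu_{\text{sc}}$ together with the fact that these three components are pairwise mutually singular. Since mutually singular measures have total variations that add,
\[
\left| \mu \right| \, = \, \left| \mu_{\text{pp}} \right| + \left| \mu_{\text{ac}} \right| + \left| \mu_{\text{sc}} \right|,
\]
which is the key identity that drives both bounds.

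For the left inequality, I would fix $\alpha \in \{\text{pp}, \text{ac}, \text{sc}\}$ and $t \in G$. From the above identity, for the precompact Borel set $t + U$ one immediately has $\left| \mu_{\alpha} \right|(t+U) \leqslant \left| \mu \right|(t+U)$. Taking the supremum over $t \in G$ on both sides yields $\| \mu_{\alpha} \|_U \leqslant \| \mu \|_U$.

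For the right inequality, I would simply invoke the triangle inequality of the norm $\| \cdot \|_U$ (which was recorded in Lemma~\ref{lem:2}, but also follows in one line from the subadditivity of the total variation and of the supremum): writing $\mu = \mu_{\text{pp}} + \mu_{\text{ac}} + \mu_{\text{sc}}$, we obtain
\[
\| \mu \|_U \, \leqslant \, \| \mu_{\text{pp}} \|_U + \| \mu_{\text{ac}} \|_U + \| \mu_{\text{sc}} \|_U.
\]

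There is no real obstacle here; the only point worth being careful about is that the Lebesgue decomposition components are pairwise singular, so their total variations truly add (rather than only satisfying a triangle inequality). This turns what would otherwise be a one-sided estimate into the two-sided control stated in the lemma.
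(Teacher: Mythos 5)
Your proposal is correct and follows essentially the same route as the paper: the authors also reduce everything to the identity $\left|\mu\right| = \left|\mu_{\text{pp}}\right| + \left|\mu_{\text{ac}}\right| + \left|\mu_{\text{sc}}\right|$ (citing Hewitt--Ross for it, where you invoke mutual singularity) and then note that both inequalities follow immediately. You merely spell out the two one-line deductions that the paper leaves implicit.
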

\begin{proof} We follow the idea of \cite[Cor.~8.4]{NS12}. By \cite[Thm.~14.22]{HR},
we have
\begin{displaymath}
\left| \mu \right| =\left| \mu_{\text{pp}} \right|+\left| \mu_{\text{ac}} \right|+\left| \mu_{\text{sc}} \right| \,.
\end{displaymath}
The claim follows immediately from this.
\end{proof}

The following statements are immediate consequences of Lemma~\ref{lem:3}
\begin{coro}
One has
\begin{displaymath}
\cM^\infty(G)=\cM^\infty_{\text{pp}}(G)\oplus \cM^\infty_{\text{ac}}(G) \oplus \cM^\infty_{\text{sc}}(G) \,.
\end{displaymath}
\end{coro}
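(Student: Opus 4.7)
The plan is to combine two well-known facts: the classical Lebesgue decomposition theorem for Radon measures, and the estimate provided by Lemma~\ref{lem:3}. Concretely, I would first recall that any Radon measure $\mu$ on $G$ admits a unique decomposition $\mu = \mu_{\text{pp}} + \mu_{\text{ac}} + \mu_{\text{sc}}$ with respect to the Haar measure $\theta_G$, where the three summands are pure point, absolutely continuous, and singular continuous, respectively. This gives the set-theoretic decomposition $\cM(G) = \cM_{\text{pp}}(G) \oplus \cM_{\text{ac}}(G) \oplus \cM_{\text{sc}}(G)$ as a direct sum, since the uniqueness of the Lebesgue decomposition immediately implies that the intersection of any two of these subspaces is $\{0\}$.

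The only thing left to check is that this decomposition restricts properly to the translation bounded setting; that is, if $\mu \in \cM^\infty(G)$, then each component $\mu_\alpha$ (for $\alpha \in \{\text{pp},\text{ac},\text{sc}\}$) is again translation bounded. This is precisely what Lemma~\ref{lem:3} provides: fixing any open precompact set $U$ with non-empty interior, we have
\[
\| \mu_\alpha \|_U \;\leqslant\; \| \mu \|_U \;<\; \infty,
\]
so $\mu_\alpha \in \cM^\infty_\alpha(G)$. Conversely, if $\mu_{\text{pp}} \in \cM^\infty_{\text{pp}}(G)$, $\mu_{\text{ac}} \in \cM^\infty_{\text{ac}}(G)$, and $\mu_{\text{sc}} \in \cM^\infty_{\text{sc}}(G)$, then their sum is translation bounded by the triangle inequality (or by the right-hand side of Lemma~\ref{lem:3}), hence lies in $\cM^\infty(G)$.

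There is no serious obstacle here; the proof is a direct reading-off of the two ingredients. The one small point worth stating explicitly is directness of the sum: if $\mu_{\text{pp}} + \mu_{\text{ac}} + \mu_{\text{sc}} = 0$ with each term in the corresponding spectral class, then uniqueness of the Lebesgue decomposition of the zero measure forces each summand to vanish. Combining this with the decomposition and the translation boundedness of each component then yields the claimed equality $\cM^\infty(G) = \cM^\infty_{\text{pp}}(G) \oplus \cM^\infty_{\text{ac}}(G) \oplus \cM^\infty_{\text{sc}}(G)$.
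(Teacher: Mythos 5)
Your argument is correct and is exactly the route the paper intends: the corollary is stated as an immediate consequence of Lemma~\ref{lem:3}, whose first inequality $\|\mu_\alpha\|_U \leqslant \|\mu\|_U$ is precisely what guarantees that each Lebesgue component of a translation bounded measure is again translation bounded, with directness coming from uniqueness of the Lebesgue decomposition. You have simply written out the details the paper leaves implicit.
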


\begin{coro}\label{coro:1}
Let $\mu, \mu_n \in \cM^{\infty}(G)$, for all $n\in\N$. Then, $\limn \| \mu-\mu_n \|_U = 0$ if and only if
\begin{displaymath}
\limn  \| \left( \mu-\mu_n \right)_{\alpha} \|_U =0\quad \ \text{ for all } \alpha \in  \{ \text{pp}, \text{ac} , \text{sc} \} \,.
\end{displaymath}
\end{coro}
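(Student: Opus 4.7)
The plan is to derive this corollary as an immediate consequence of the two-sided estimate provided by Lemma~\ref{lem:3}, applied to the differences $\mu - \mu_n$. The key observation is that the Lebesgue decomposition is additive, so $(\mu - \mu_n)_\alpha = \mu_\alpha - (\mu_n)_\alpha$ for each $\alpha \in \{\text{pp}, \text{ac}, \text{sc}\}$, and hence we can unambiguously talk about the $\alpha$-component of the difference.

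For the forward implication, I would invoke the left inequality in Lemma~\ref{lem:3}, namely $\|(\mu - \mu_n)_\alpha\|_U \leqslant \|\mu - \mu_n\|_U$, for each fixed $\alpha$. Taking $n \to \infty$ and using the squeeze argument (together with $\|\cdot\|_U \geqslant 0$) yields $\lim_n \|(\mu - \mu_n)_\alpha\|_U = 0$ for every $\alpha$.

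For the reverse implication, I would apply the right inequality in Lemma~\ref{lem:3}:
\[
\|\mu - \mu_n\|_U \leqslant \|(\mu - \mu_n)_{\text{pp}}\|_U + \|(\mu - \mu_n)_{\text{ac}}\|_U + \|(\mu - \mu_n)_{\text{sc}}\|_U.
\]
The assumed convergence of each of the three terms on the right-hand side to $0$ forces the left-hand side to tend to $0$ as well.

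There is really no main obstacle here beyond noting that Lemma~\ref{lem:3} is applicable to an arbitrary element of $\cM^\infty(G)$, and in particular to $\mu - \mu_n$ (which lies in $\cM^\infty(G)$ since this space is a vector space). So the proof reduces to a three-line argument sandwiching the quantity $\|\mu - \mu_n\|_U$ between the three component norms.
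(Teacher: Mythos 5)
Your proposal is correct and follows exactly the route the paper intends: the paper states Corollary~\ref{coro:1} as an immediate consequence of Lemma~\ref{lem:3} applied to $\mu-\mu_n$, using the left inequality for the forward direction and the right inequality for the converse. Your additional remarks on the additivity of the Lebesgue decomposition and on $\mu-\mu_n\in\cM^\infty(G)$ just make explicit what the paper leaves implicit.
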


\begin{coro}\label{coro:2}
Let $\mu \in \NAP(G)$. Then $ \mu_{\text{pp}}, \mu_{\text{ac}},\mu_{\text{sc}} \in \NAP(G)$.
\end{coro}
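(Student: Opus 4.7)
The plan is to show directly that, for each $\alpha\in\{\text{pp},\text{ac},\text{sc}\}$, every $\varepsilon$-almost period of $\mu$ is also an $\varepsilon$-almost period of $\mu_\alpha$. The two ingredients are Lemma~\ref{lem:3} (applied to the difference $T_t\mu-\mu$) and the fact that translation commutes with the Lebesgue decomposition.

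First, I would observe that $\mu_\alpha \in \cM^\infty(G)$: since $|\mu_\alpha|\leqslant|\mu|$, translation boundedness of $\mu$ immediately gives translation boundedness of $\mu_\alpha$. Next, I would record the key algebraic identity
\[
(T_t\mu)_\alpha = T_t(\mu_\alpha) \qquad \text{for every } t\in G,\ \alpha\in\{\text{pp},\text{ac},\text{sc}\}.
\]
This is standard but worth a one-line justification: $T_t$ sends atoms to atoms (hence preserves the pp part), sends $f\,\theta_G$ to $(T_tf)\,\theta_G$ by translation invariance of Haar measure (hence preserves the ac part), and preserves the sc part by uniqueness of the Lebesgue decomposition. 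From this identity I get
\[
(T_t\mu-\mu)_\alpha = T_t(\mu_\alpha) - \mu_\alpha.
\]

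Now I would apply Lemma~\ref{lem:3} to the translation bounded measure $\nu:=T_t\mu-\mu$, which yields
\[
\| T_t(\mu_\alpha) - \mu_\alpha\|_U = \|(T_t\mu-\mu)_\alpha\|_U \leqslant \|T_t\mu - \mu\|_U.
\]
Fix a compact set $K\subseteq G$ with non-empty interior (or, equivalently via Lemma~\ref{lem:2}, a precompact open $U$). Given $\varepsilon>0$, the previous inequality shows
\[
P_\varepsilon^K(\mu) \subseteq P_\varepsilon^K(\mu_\alpha),
\]
and since the left-hand side is relatively dense by hypothesis, so is the right-hand side. Thus $\mu_\alpha \in \NAP(G)$ for each of the three components.

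The argument is essentially mechanical once the commutation $(T_t\mu)_\alpha=T_t(\mu_\alpha)$ is noted, so there is no real obstacle; the only subtlety is making sure we invoke Lemma~\ref{lem:3} on the \emph{difference} $T_t\mu-\mu$ rather than on $\mu$ itself, which is the point that makes norm almost periodicity transfer to each spectral component (and mirrors the dichotomy observed in Section~\ref{sap leb} for strong almost periodicity, which does \emph{not} transfer).
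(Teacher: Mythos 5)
Your proof is correct and takes essentially the same approach as the paper: Corollary~\ref{coro:2} is stated there as an immediate consequence of Lemma~\ref{lem:3}, and your argument simply fills in the intended details by applying that lemma to the difference $T_t\mu-\mu$ together with the (correctly justified) commutation of translation with the Lebesgue decomposition, yielding $P^K_\varepsilon(\mu)\subseteq P^K_\varepsilon(\mu_\alpha)$.
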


We can now prove the following result.

\begin{prop}\label{prop:1}
The spaces $\cM_{\text{pp}}^\infty(G),\, \cM_{\text{ac}}^\infty(G)$ and $\cM_{\text{sc}}^\infty(G)$ are closed in $(\cM^\infty(G), \| \cdot \|_U)$. In particular, $(\cM_{\text{pp}}^\infty(G), \| \cdot \|_U), (\cM_{\text{ac}}^\infty(G), \| \cdot \|_U)$ and  $(\cM_{\text{sc}}^\infty(G), \| \cdot \|_U)$ are Banach spaces.
\end{prop}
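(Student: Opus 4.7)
The plan is to reduce the closedness claim directly to Corollary~\ref{coro:1} by exploiting the uniqueness of the Lebesgue decomposition. Suppose $\mu_n \in \cM_\alpha^\infty(G)$ (for a fixed $\alpha \in \{\text{pp},\text{ac},\text{sc}\}$) and $\mu_n \Rightarrow \mu$ in the norm $\|\cdot\|_U$, where $\mu \in \cM^\infty(G)$. I want to show that $\mu \in \cM_\alpha^\infty(G)$, i.e.\ that the two other spectral components $\mu_\beta$ (with $\beta \neq \alpha$) vanish.

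Because the Lebesgue decomposition is linear and unique, and because each $\mu_n$ has $(\mu_n)_\beta = 0$ for every $\beta \neq \alpha$, I have $(\mu-\mu_n)_\beta = \mu_\beta$ for every $n$ and every $\beta \neq \alpha$. Applying Corollary~\ref{coro:1} to the norm-convergent sequence $(\mu_n)$, I obtain
\[
\|\mu_\beta\|_U = \|(\mu - \mu_n)_\beta\|_U \xrightarrow[n\to\infty]{} 0
\]
for each such $\beta$. Since $\|\cdot\|_U$ is a norm on $\cM^\infty(G)$ (Lemma~\ref{lem:2}), this forces $\mu_\beta = 0$ for every $\beta \neq \alpha$, hence $\mu = \mu_\alpha \in \cM_\alpha^\infty(G)$. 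This gives closedness of each of the three subspaces.

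For the ``in particular'' part, I invoke Theorem~\ref{cm Banach} together with its corollary, which say that $(\cM^\infty(G), \|\cdot\|_U)$ is a Banach space. A closed subspace of a Banach space is itself a Banach space, so $(\cM_{\text{pp}}^\infty(G), \|\cdot\|_U)$, $(\cM_{\text{ac}}^\infty(G), \|\cdot\|_U)$ and $(\cM_{\text{sc}}^\infty(G), \|\cdot\|_U)$ are all Banach spaces.

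There is no real obstacle here: the whole argument is a bookkeeping exercise once Corollary~\ref{coro:1} is in hand. The only thing one has to be mildly careful about is to make sure that the decomposition respects differences, i.e.\ $(\mu-\nu)_\alpha = \mu_\alpha - \nu_\alpha$, which is immediate from uniqueness of the Lebesgue decomposition (pure point, absolutely continuous and singular continuous parts are mutually singular and each is additive).
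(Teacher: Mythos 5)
Your proof is correct and follows essentially the same route as the paper: both reduce the closedness claim to Corollary~\ref{coro:1} applied componentwise, using that $(\mu_n)_\beta=0$ forces $\|\mu_\beta\|_U=0$ for $\beta\neq\alpha$, and then invoke completeness of $(\cM^\infty(G),\|\cdot\|_U)$ for the Banach space conclusion. Your explicit remark that $(\mu-\mu_n)_\beta=\mu_\beta$ by linearity and uniqueness of the Lebesgue decomposition just makes precise a step the paper leaves implicit.
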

\begin{proof}
Let $\alpha \in \{ \text{pp}, \text{ac}, \text{sc} \}$, and let $(\mu_n)_{n\in\N}$ be a sequence in $\cM^\infty_{\alpha}(G)$. Assuming that $\mu_n \to \mu$ in $\cM^\infty(G)$, we need to show that $\mu \in \cM^\infty_{\alpha}(G)$.

Now, if $\beta \in \{ \text{pp}, \text{ac}, \text{sc} \}$ and $\beta \neq \alpha$, we have $\left( \mu_n \right)_\beta =0$. Therefore, by Corollary~\ref{coro:1}, we  get $ \| \mu_{\beta} \|_U=0$ and hence $\mu_{\beta} =0$. As $\mu _\beta=0$ for all $\beta \neq \alpha$, $\beta \in \{ \text{pp}, \text{ac}, \text{sc} \}$, we get $\mu \in \cM^\infty_{\alpha}(G)$, as claimed.
\end{proof}

We complete this section by showing that the spaces of pure point, absolutely continuous and singular continuous measures, respectively, are not closed in the product topology. To do so, we first provide a simple lemma which simplifies some of our computations below.

\begin{lemma}\label{lem:4}
Let $\mu_n,\, \mu \in \cM^\infty(G)$, for all $n\in\N$, such that $\mu_n \xrightarrow{\pi} \mu$. If $\nu$ has compact support, we have
\begin{displaymath}
 \mu_n*\nu \xrightarrow{\pi} \mu*\nu \,.
\end{displaymath}
\end{lemma}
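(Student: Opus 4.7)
The plan is to reduce the statement directly to the hypothesis $\mu_n \xrightarrow{\pi} \mu$ by identifying the correct test function. For an arbitrary $g \in \Cc(G)$, I would write
\[
(\mu_n * \nu) * g - (\mu * \nu) * g = (\mu_n - \mu) * (\nu * g),
\]
and note that it therefore suffices to show that the right-hand side tends to $0$ in $\| \cdot \|_\infty$. The decisive observation is that the function $h := \nu * g$ lies in $\Cc(G)$, so the product-topology convergence $\mu_n \xrightarrow{\pi} \mu$ applies to $h$ as a test function.

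First I would check that $h = \nu * g \in \Cc(G)$. Since $\nu$ has compact support, it is a finite measure; for $g \in \Cc(G)$ the pointwise convolution $(\nu * g)(x) = \int g(x-y)\, d\nu(y)$ is well-defined, continuous in $x$ by dominated convergence (with majorant $\|g\|_\infty \cdot \mathbf{1}_{\supp\nu}$ and using continuity of translation in $g$), and vanishes off the compact set $\supp(\nu) + \supp(g)$.

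Next I would justify the associativity identity $(\mu_n * \nu) * g = \mu_n * (\nu * g)$, together with its analogue for $\mu$. Both sides are continuous functions on $G$, and by Fubini's theorem they evaluate at any $x \in G$ to
\[
\int_G \int_G g(x - y - z)\, d\nu(y)\, d\mu_n(z),
\]
where the interchange is justified because $\nu$ is finite, $\mu_n$ is translation bounded, and for each fixed $x$ the integrand has compact support in $(y,z)$. The existence of $\mu * \nu$ as a translation bounded measure (and similarly for $\mu_n * \nu$) is the standard fact that a translation bounded measure is convolvable with any compactly supported measure.

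Applying the hypothesis $\mu_n \xrightarrow{\pi} \mu$ to the test function $h$ then gives
\[
\|(\mu_n * \nu - \mu * \nu) * g\|_\infty = \|(\mu_n - \mu) * h\|_\infty \longrightarrow 0,
\]
and since $g \in \Cc(G)$ was arbitrary, the lemma follows. The only real obstacle is the bookkeeping around associativity and convolvability, which is routine once one records that $\nu * g \in \Cc(G)$; there is no conceptual difficulty.
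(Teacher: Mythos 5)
Your argument is correct and is essentially identical to the paper's own proof: both set $h := \nu * g \in \Cc(G)$, use associativity of convolution, and apply the hypothesis $\mu_n \xrightarrow{\pi} \mu$ to the test function $h$. The extra bookkeeping you supply (continuity and compact support of $\nu*g$, Fubini for associativity) is exactly the routine detail the paper leaves implicit.
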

\begin{proof} Let $g \in \Cc(G)$. Since $\nu$ has compact support, we have $f:= g*\nu \in \Cc(G)$. As $\mu_n \to \mu$ in the product topology and $f \in \Cc(G)$, we get
\begin{displaymath}
\limn \| \mu_n *f - \mu*f \|_\infty =0 \,.
\end{displaymath}
Therefore, we have
\begin{displaymath}
\limn \| (\mu_n*\nu) *g - (\mu*\nu)*g \|_\infty =0 \,.
\end{displaymath}
As $g \in \Cc(G)$ is arbitrary, the claim follows.
\end{proof}

\smallskip

Now, we look at some examples.

\begin{example}\label{ex1}
Let $\mu_n =\frac{1}{n}\sum_{k=1}^n \delta_{\frac{k}{n}}$, for all $n\in\N$. Then, $\mu_n \xrightarrow{\pi} \lm_{[0,1]}$.
\end{example}
\begin{proof}
Let $f \in \Cc(G)$. Each such $f$ is uniformly continuous. Fix $\eps>0$. Then, there is $N\in\N$ such that
\begin{align*}
\left| (f*\lm|_{[0,1]})(x) - (f*\mu_n)(x)\right|
    &=\left|\int_{0}^1 f(x-y)\ \dd y - \frac{1}{n} \sum_{k=1}^n f\big(x-
        \frac{k}{n}\big) \right|  \\
     &=\left|\sum_{k=1}^n \int_{\frac{k-1}{n}}^{\frac{k}{n}} f(x-y)\ \dd
        y -  \sum_{k=1}^n \int_{\frac{k-1}{n}}^{\frac{k}{n}} f\big(x-
        \frac{k}{n}\big)\ \dd y \right|  \\
    &\leqslant \sum_{k=1}^n \int_{\frac{k-1}{n}}^{\frac{k}{n}}  \left|f(x-y) -
       f\big(x- \frac{k}{n}\big) \right|  \dd y \\
    &< \sum_{k=1}^n \int_{\frac{k-1}{n}}^{\frac{k}{n}} \eps\ \dd y  \, =\, \eps
\end{align*}
holds independently of $x$, for all $n\geqslant N$, due to the uniform continuity of $f$. Therefore, for all $n >N$, we have
\begin{displaymath}
\|f*\lm|_{[0,1]}- f*\mu_n\|_{\infty} \leqslant \eps \,.   \qedhere
\end{displaymath}
\end{proof}

\smallskip

\begin{example}\label{ex4}
Let $\nu$ be any singular continuous measure of compact support. Then, with $\mu_n$ as in Example \ref{ex1}, the measures $\mu_n*\nu$ are singular continuous, and the sequence $(\mu_n*\nu)_{n\in\N}$ converges in the product topology to the absolutely continuous measure $\nu*\lm_{[0,1]}$.
\end{example}
\noindent\textit{Proof.}
It is easy to see that $\mu_n*\nu$ is a finite sum of singular continuous measures, which is again singular continuous.

Next, $\nu*\lm|_{[0,1]}$ is an absolutely continuous measure because
\begin{align*}
(\nu*\lm|_{[0,1]})(\phi)
    &= \int_{\R} \int_{\R} \phi(x+y)\, 1_{[0,1]}(x)\ \dd\lm(x)\, \dd\nu(y)  \\
    &= \int_{\R} \int_{\R} \phi(x)\, 1_{[0,1]}(x-y)\ \dd\lm(x)\, \dd\nu(y)  \\
    &= \int_{\R} \int_{\R}\, 1_{[0,1]}(x-y)\ \dd\nu(y)\ \phi(x)\ \dd\lm(x)  \\
    &= \int_{\R} h(x)\, \phi(x)\ \dd\lm(x),
\end{align*}
where $h(x):= (1_{[0,1]}*\nu)(x)$.

Finally, by Lemma~\ref{lem:4}, the sequence $(\mu_n*\nu)_{n\in\N}$ converges in the product topology to $\nu*\lm_{[0,1]}$. \exend

\begin{example} Consider the following measures on $\R^2$:
\begin{displaymath}
\mu_n =\frac{1}{n}\sum_{k=1}^n \delta_{(\frac{k}{n},0)},\quad n\in\N \,.
\end{displaymath}
Then, exactly as in Example~\ref{ex1}, it can be shown that $(\mu_n)_{n\in\N}$ converges in the product topology to the singular continuous measure
\begin{displaymath}
 \lambda(f)= \int_0^1 f(x,0)\ \dd x
\end{displaymath}
for all $f \in \Cc(\R^2)$. \exend
\end{example}

\begin{example} \label{ex2}
Let $(f_\alpha)_{\alpha}$ be an approximate identity for $(\Cu(G), \| \cdot \|_\infty)$. Then, the net $(\mu_{\alpha})_{\alpha}$, with $\mu_\alpha = f_\alpha\, \theta_G$, converges in the product topology to $\delta_0$.
\end{example}
\noindent\textit{Proof.}
Since $\mu_{\alpha}*f=f_{\alpha}*f$ and $\delta_0*f=f$ for all $f\in\Cc(G)$, the claim follows from \cite[Thm. 1.2.19(b)]{Gra}.  \exend

\begin{example}\label{ex5}
Let $\nu$ be any singular continuous measure of compact support. Then, with $f_\alpha$ as in Example \ref{ex2}, the measures $\mu_\alpha*\nu$ are absolutely continuous, and $(\mu_{\alpha}*\nu)_{\alpha}$ converges in the product topology to the singular continuous measure $\nu$.
\end{example}
\noindent\textit{Proof.}
It is obvious that $\mu_\alpha*\nu$ are absolutely continuous, and by Lemma~\ref{lem:4}, we have
\begin{displaymath}
\lim_\alpha \mu_\alpha*\nu =\delta_0* \nu =\nu
\end{displaymath}
in the product topology.  \exend

\medskip

Next, we provide a slight generalisation of Example~\ref{ex2}.

\begin{lemma}\label{lemm gen approximate identity}
Let $(\mu_n)_{n\in\N}$ be a sequence of probability measures such that, for each $\eps >0$, there exists some $N\in\N$ such that, for all $n > N$, we have $\supp(\mu_n) \subseteq B_{\eps}(0)$. Then, we have
\begin{displaymath}
\limn \mu_n = \delta_0
\end{displaymath}
in the product topology.
\end{lemma}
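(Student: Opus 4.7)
The plan is to unpack what convergence in the product topology means and then exploit uniform continuity of test functions.

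Recall that $\mu_n \xrightarrow{\pi} \delta_0$ means $\|\mu_n * g - \delta_0 * g\|_\infty = \|\mu_n * g - g\|_\infty \to 0$ for every $g \in \Cc(G)$. So fix an arbitrary $g \in \Cc(G)$. Since $g$ has compact support and is continuous, it is uniformly continuous on $G$; hence given $\eps > 0$ there is $\delta > 0$ such that $|g(x-y) - g(x)| < \eps$ for all $x \in G$ whenever $y \in B_\delta(0)$.

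Next I would use that each $\mu_n$ is a \emph{probability} measure to rewrite
\[
(\mu_n * g)(x) - g(x) = \int_G g(x-y)\, \dd\mu_n(y) - g(x)\int_G \dd\mu_n(y) = \int_G \bigl(g(x-y) - g(x)\bigr)\, \dd\mu_n(y).
\]
By hypothesis applied to $\delta$, there exists $N \in \N$ such that $\supp(\mu_n) \subseteq B_\delta(0)$ for all $n > N$. For such $n$, the integrand in the last display is supported in $B_\delta(0)$ and bounded there by $\eps$, so
\[
|(\mu_n * g)(x) - g(x)| \leqslant \int_{B_\delta(0)} |g(x-y) - g(x)|\, \dd\mu_n(y) \leqslant \eps\, \mu_n(B_\delta(0)) = \eps,
\]
uniformly in $x \in G$. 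Taking $\sup_{x \in G}$ and then $n \to \infty$ yields $\|\mu_n * g - g\|_\infty \to 0$. Since $g \in \Cc(G)$ was arbitrary, we conclude $\mu_n \to \delta_0$ in the product topology.

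There is no real obstacle here; the proof is short and essentially a standard approximate identity argument. The only thing to be careful about is writing $g(x)$ as an integral against $\mu_n$ (which uses the probability assumption) so that the two terms can be combined into a single integral, and then invoking uniform continuity of $g \in \Cc(G)$ to make the shrinking-support hypothesis do its work uniformly in $x$.
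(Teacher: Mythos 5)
Your proof is correct and follows essentially the same route as the paper's: use the probability hypothesis to write $(\mu_n*g)(x)-g(x)$ as a single integral of $g(x-y)-g(x)$ over the shrinking support, then invoke uniform continuity of $g$ to bound it by $\eps$ uniformly in $x$. The only cosmetic difference is that the paper spells out the translation-invariance of the metric to justify the uniform continuity estimate.
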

\begin{proof}
First note that every second countable locally compact Abelian group is metrisable.
Let $f \in \Cc(G)$ be arbitrary, and fix $\eps >0$. Since $f$ is uniformly continuous, there exists some $\delta >0$ such that, for all $x,y \in G$ with $d(x,y) < \delta$, we have $\left|f(x)-f(y) \right| < \eps$.
By the condition on the support, there exists some $N\in\N$ such that, for all $n >N$, we have $\supp(\mu_n) \subseteq B_{\delta}(0)$.
Then, since each $\mu_n$ is a probability measure, for all $n >N$ and all $x \in G$, we have
\begin{align*}
\left| (\mu_n*f)(x) - (\delta_0*f)(x) \right|
    &= \left| \int_G f(x-y)\ \dd \mu_n(y) -f(x) \right|  \\
    &= \left| \int_G f(x-y)\ \dd \mu_n(y) - \int_G f(x)\ \dd \mu_n(y) \right| \\
    &\leqslant  \int_G \left|  f(x-y) - f(x) \right|\ \dd \mu_n(y) \\
    &=  \int_{B_\delta(0)}  \left|  f(x-y) - f(x) \right| \dd \mu_n(y)
\end{align*}
where we used $\supp(\mu_n) \subseteq B_{\delta}(0)$ in the last step.
Now, since $y \in B_\delta(0)$, we get $d(y,0) < \delta$ and hence, since the metric is translation invariant, $d(x-y,y) <\delta$. By the uniform continuity of $f$, this gives $\left|  f(x-y) - f(x) \right| < \eps$ and thus
\begin{displaymath}
  \left| (\mu_n*f)(x) - (\delta_0*f)(x) \right|  < \eps \
\end{displaymath}
for all $x\in G$ and $n>N$, which implies the claim.
\end{proof}

\begin{remark}
\begin{itemize}
\item[(i)] In Lemma~\ref{lemm gen approximate identity}, the condition that each $\mu_n$ is a probability measure can be weakened to $\mu_n(G)=1$ and there exists some $C >0$ such that $\left| \mu_n \right|(G) <C$, for all $n\in\N$.
\item[(ii)] Let $G$ be an arbitrary LCAG, which is not necessarily metrisable, and let $(\mu_\alpha )_\alpha$ be a net of probability measures such that, for each open set $0 \in V \subseteq G$, there exists some $\beta$ such that, for all $\alpha >  \beta$, we have $\supp(\mu_\alpha) \subseteq V $. Then, exactly as in the proof of Lemma~\ref{lemm gen approximate identity}, it can be shown that $(\mu_\alpha)_{\alpha}$ converges in the product topology to $\delta_0$.
\end{itemize}\exend
\end{remark}

Now, we provide two examples of singular continuous measures which converge to pure point measures.

\begin{example}\label{ex6}
For all $n\in\N$, let $\mu_n$ be the normalised line integral over the circle $x^2+y^2=\frac{1}{n^2}$ in $\R^2$, that is
\begin{displaymath}
\mu_n(f)=\frac{1}{2 \pi} \int_{0}^{2 \pi} f\Big( \frac{\cos(t)}{n}, \frac{\sin(t)}{n} \Big)\ \dd t \,.
\end{displaymath}
Then, every $\mu_n$ is a singular continuous measure, and by Lemma~\ref{lemm gen approximate identity}, the sequence $(\mu_n)_{n\in\N}$ converges in the product topology to $\delta_0$.      \exend
\end{example}

\begin{example}\label{ex7}
Let $\mu$ be any singular continuous probability measure on $\R$ supported inside $[0,1]$.
Define $\mu_n$, for all $n\in\N$, by
\begin{displaymath}
\mu_n(f)=\int_{\R} f(nx)\ \dd \mu(x) \,.
\end{displaymath}
Then, each $\mu_n$ is a singular continuous probability measures supported inside $[0, \frac{1}{n}]$, and therefore, by Lemma~\ref{lemm gen approximate identity}, $(\mu_n)_{n\in\N}$ converges in the product topology to $\delta_0$.\exend
\end{example}

\section{Strong versus norm almost periodicity}\label{SAP vs NAP}

The purpose of this section is to show that norm almost periodicity is a uniform version of strong almost periodicity.

Recall that, for a measure $\mu$, we can define
\begin{displaymath}
P_{\varepsilon}^U(\mu):= \{ t \in G\ |\ \| T_t \mu -\mu \|_U \leqslant \varepsilon \} \,.
\end{displaymath}
Similarly, for a $f \in \Cu(G)$, we can define
\begin{displaymath}
P_{\varepsilon}(f):= \{ t \in G\ |\ \| T_t f -f \|_\infty \leqslant \varepsilon \} \,.
\end{displaymath}

\begin{remark}
\begin{itemize}
\item [(i)] Sometimes the set of $\eps$-almost periods is defined with strict inequality. It is easy to see that the notion of almost periodicity is independent of the choice of $\leqslant$ or $<$.
\item [(ii)] Usually, the norm $\| \cdot \|_K$ and norm almost periodicity are defined using compact sets $K$. Working with open precompact sets makes our computations below much simpler.
\end{itemize}\exend
\end{remark}

Let us start with the following lemma.

\begin{lemma}\label{C2} Let $U \subseteq G$ be an open precompact set, and let $\cF \subseteq \cF_U$ be any set which is dense in $(\cF_U, \| \cdot \|_U)$. Then, we have
\begin{displaymath}
P_{\varepsilon}^U(\mu)= \{ t \in G\ |\ \| T_t (\mu*g^{\dagger}) -\mu*g^{\dagger} \|_{\infty} \leqslant \varepsilon\ \text{ for all }g \in \cF \} = \bigcap_{g\, \in\, \cF} P_{\varepsilon}(\mu*g^{\dagger}) \,.
\end{displaymath}
In particular
\begin{displaymath}
P_{\varepsilon}^U(\mu)= \{ t \in G\ |\ \| T_t (\mu*g^{\dagger}) -\mu*g^{\dagger} \|_{\infty} \leqslant \varepsilon\ \text{ for all }g \in \cF_U \} = \bigcap_{g\, \in\, \cF_U} P_{\varepsilon}(\mu*g^{\dagger}) \,.
\end{displaymath}
\end{lemma}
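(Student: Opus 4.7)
The plan is to deduce this identity directly from the operator-norm formula for $\| \cdot \|_U$ established in Proposition~\ref{P1} and Corollary~\ref{C5}, applied to the translation bounded measure $\nu := T_t\mu - \mu$. The key observation is that $g \mapsto g^{\dagger}$ is an isometric bijection between $\cF_U$ and $\cF_{-U}$, since $\supp(g^\dagger) = -\supp(g)$ and $\|g^\dagger\|_\infty = \|g\|_\infty$. In particular, if $\cF$ is dense in $(\cF_U, \| \cdot \|_\infty)$, then $\cF^\dagger := \{ g^\dagger : g \in \cF\}$ is dense in $(\cF_{-U}, \| \cdot \|_\infty)$.

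Applying Corollary~\ref{C5} to $\nu$ with this dense family, and using that convolution commutes with translation so that $(T_t\mu - \mu) * g^\dagger = T_t(\mu * g^\dagger) - \mu * g^\dagger$, gives
\begin{equation*}
\| T_t\mu - \mu \|_U \; = \; \sup_{g \in \cF} \| T_t(\mu * g^\dagger) - \mu * g^\dagger \|_\infty \, .
\end{equation*}
From this single identity both equalities of the lemma follow: the condition $\| T_t\mu - \mu\|_U \leqslant \varepsilon$ is equivalent to each of the supremands being $\leqslant \varepsilon$, which is exactly the defining condition for $t$ to belong to $\bigcap_{g \in \cF} P_\varepsilon(\mu * g^\dagger)$. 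The ``in particular'' statement is then the special case $\cF = \cF_U$, which is trivially dense in itself.

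The only genuine verification needed is that Corollary~\ref{C5} indeed produces the displayed supremum once one replaces $\cF_{-U}$ by the dense subset $\cF^\dagger$; this uses that for $\mu \in \cM^\infty(G)$ convolution defines a bounded linear operator from $(C(G:-U), \|\cdot\|_\infty)$ into $(\Cu(G), \|\cdot\|_\infty)$ with operator norm $\|\mu\|_U$ (Proposition~\ref{P1}), so that $\|\mu * g_n^\dagger - \mu * g_0^\dagger\|_\infty \leqslant \|\mu\|_U \|g_n - g_0\|_\infty \to 0$ whenever $g_n \to g_0$ in $\| \cdot \|_\infty$. This continuity is what allows the inequality $\| T_t(\mu * g_n^\dagger) - \mu*g_n^\dagger\|_\infty \leqslant \varepsilon$ to pass to the limit along any approximating sequence from $\cF$, which is really the only nontrivial step in the argument.
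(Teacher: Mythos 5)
Your proposal is correct and follows essentially the same route as the paper: both apply the operator-norm/dense-family formula of Corollary~\ref{C5} to the measure $T_t\mu-\mu$, use that $g\mapsto g^{\dagger}$ is an isometry carrying $\cF\subseteq\cF_U$ onto a dense subset of $\cF_{-U}$, and read off the identity $\|T_t\mu-\mu\|_U=\sup_{g\in\cF}\|T_t(\mu*g^{\dagger})-\mu*g^{\dagger}\|_\infty$, from which both displayed equalities are immediate. The paper merely writes the same computation in the form $\sup_{(x,g)}|(T_t\mu-\mu)(T_xg)|$ before identifying $\mu(T_xg)=(\mu*g^{\dagger})(x)$, so there is no substantive difference.
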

\begin{proof}
Let
\begin{align*}
B_{\varepsilon}&:= \{ t \in G\ |\ \| T_t (\mu*g^{\dagger}) -\mu*g^{\dagger} \|_{\infty} \leqslant \varepsilon\ \text{ for all }g \in \cF \} \,, \\
C_{\varepsilon}&: = \bigcap_{g\, \in\, \cF} P_{\varepsilon}(\mu*g^{\dagger}) \,.
\end{align*}
The equality $P^U_\varepsilon(\mu)=B_{\varepsilon}$ is an immediate consequence of Corollary~\ref{C5} because
\begin{align*}
\|T_t\mu-\mu\|_{U}
     &= \sup_{(x,g)\in G\times \cF} \left| (T_t\mu-\mu)(T_xg)\right|  = \sup_{(x,g)\in G\times \cF} \left| \mu(T_{x-t}g) - \mu(T_xg)\right|  \\
     &= \sup_{(x,g)\in G\times \cF} \left| \big(T_t(\mu*g^{\dagger})\big)(x) - (\mu*g^{\dagger})(x)\right|  \\
     &= \sup_{g\in\cF} \sup_{x \in G}  \left| \big(T_t(\mu*g^{\dagger})\big)(x) - (\mu*g^{\dagger})(x)\right|  = \sup_{g\in\cF} \| T_t(\mu*g^{\dagger}) - \mu*g^{\dagger} \|_{\infty} \,.
\end{align*}
Therefore, we have
\begin{align*}
t\in P_{\varepsilon}^{U}(\mu)
    &\iff \|T_t\mu-\mu\|_U \leqslant \varepsilon \\
    &\iff \sup_{g\in\cF} \| T_t(\mu*g^{\dagger}) - \mu*g^{\dagger} \|_{\infty} \leqslant \varepsilon \\
    &\iff \|T_t(\mu*g^{\dagger}) - \mu*g^{\dagger} \|_{\infty}\leqslant \varepsilon \ \ \text{ for all }g\in\cF_{U}  \\
    &\iff t \in B_{\varepsilon}.
\end{align*}
The equality $B_{\varepsilon}=C_{\varepsilon}$ follows immediately from the definition of $P_\eps(\mu*g^\dagger)$.
\end{proof}

\begin{remark}
If $U$ is symmetric, i.e. $-U=U$, we can replace $g^{\dagger}$ by $g$.
Since we are interested in norm almost periodicity, which by Lemma~\ref{lem:2} does not depend on the choice of $U$, one can assume without loss of generality that $U=-U$, to make the computations below slightly simpler. Since this assumption doesn't simplify the formulas too much, and in future applications one may need to work without this extra assumption, we don't assume below that $U$ is symmetric.\exend
\end{remark}

\medskip

Next, we show that to check that a measure is strongly almost periodic, it suffices to use $\cF_U$ as the set of test functions.

\begin{prop}  \label{prop:char_sap}
Let $\mu \in \cM^\infty(G)$. Then, $\mu \in \SAP(G)$ if and only if $\mu*g \in \operatorname{SAP}(G)$ for all $g \in \cF_U$.
\end{prop}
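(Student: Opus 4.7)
The plan is to prove the two directions separately, with the forward direction being essentially trivial and the backward direction requiring a localisation argument.

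For the forward direction ($\Rightarrow$), if $\mu \in \SAP(G)$, then by definition $\mu*g$ is strongly almost periodic for every $g \in \Cc(G)$, and in particular for every $g \in \cF_U \subseteq \Cc(G)$. Nothing more is needed.

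For the converse ($\Leftarrow$), I want to show that $\mu*f \in \SAP(G)$ for every $f \in \Cc(G)$, given that $\mu*g \in \SAP(G)$ for all $g \in \cF_U$. The strategy is a standard cover-and-partition argument: given $f \in \Cc(G)$ with compact support $K$, cover $K$ by finitely many translates $t_1+U,\ldots,t_n+U$ (possible since $U$ is open and $K$ compact), and pick a subordinate continuous partition of unity $\phi_1,\ldots,\phi_n$ on $K$ with $\supp(\phi_i) \subseteq t_i+U$. Writing $f_i := f\phi_i$, we obtain $f = \sum_{i=1}^n f_i$ with each $f_i \in \Cc(G)$ supported in $t_i+U$. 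After translating and rescaling, $g_i := c_i\, T_{-t_i}f_i$ lies in $\cF_U$ for a suitable constant $c_i > 0$ (choose $c_i = 1/\|f_i\|_\infty$ when $f_i \not\equiv 0$, else discard the term).

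By hypothesis, $\mu*g_i \in \SAP(G)$ for each $i$. Since $\mu*f_i = c_i^{-1} T_{t_i}(\mu*g_i)$ and $\SAP(G)$ is closed under translation and scalar multiplication, each $\mu*f_i$ is strongly almost periodic. Finally, $\SAP(G)$ is a vector space (as the set of functions whose orbit closure is compact in $(\Cu(G),\|\cdot\|_\infty)$ is clearly closed under sums and scalars), so
\[
\mu*f \,=\, \sum_{i=1}^n \mu*f_i \,\in\, \SAP(G) \,.
\]
Since $f \in \Cc(G)$ was arbitrary, $\mu \in \SAP(G)$.

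The only mild technicality is ensuring the partition of unity exists with the required support property, which is standard for locally compact Hausdorff spaces; everything else reduces to the vector space and translation-invariance properties of $\SAP(G)$ together with the simple observation that every compactly supported continuous function is, up to partition and rescaling, a finite sum of translates of elements in $\cF_U$.
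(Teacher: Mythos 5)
Your proof is correct and follows essentially the same route as the paper: cover $\supp(f)$ by finitely many translates of $U$, use a subordinate partition of unity to write $f$ as a finite sum of translated, rescaled elements of $\cF_U$, and conclude via the linearity and translation invariance of $\operatorname{SAP}(G)$. Your bookkeeping of the translations and normalising constants is, if anything, slightly more careful than the paper's.
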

\begin{proof}
By definition, the given property is necessary for $\mu$ to be a strongly almost periodic measure. It is also sufficient. If $f \in \Cc(G)$ it is easy to show that there exist elements $c_1,\ldots, c_m \in \CC$, $t_1,\ldots, t_m \in G$ and $g_1,\ldots, g_m \in \cF_U$ such that
\begin{equation}\label{EQ1}
  f= \sum_{j=1}^m c_j T_{t_j} g_j \,.
\end{equation}
Indeed, since $\supp(f)$ is finite, we can find a finite open cover $\supp(f) \subseteq \bigcup_{j=1}^m (-t_j+U)$. By a standard partition of unity argument, we can find $h_1,.., h_m \in \Cc(G)$ such that $\sum_{j=1}^m h_j(x)=1$ for all $x \in \supp(f)$. Then $c_j = \| fh_j \|_\infty$ and $g_j =\frac{1}{c_j} f\cdot (T_{-t_j}h_j)$ for all $j$ such that $c_j \neq 0$ gives Eq. \eqref{EQ1}.
The claim is now obvious.
\end{proof}

We now introduce the concept of equi-Bohr almost periodicity.

\begin{definition}
Let $\mathcal{G} \subseteq \operatorname{SAP}(G)$ be any family of functions. We say that $\mathcal{G}$ is \textbf{equi-Bohr almost periodic} if, for each $\eps >0$, the set
\begin{displaymath}
P_\eps(\mathcal{G})= \{ t \in G\ |\ \|T_tf-f \|_\infty \leqslant \eps\  \text{ for all } f \in \mathcal{G} \}
\end{displaymath}
is relatively dense.
\end{definition}

\begin{remark} It is easy to see that
$
P_\eps(\mathcal{G})=\bigcap_{f\, \in\, \mathcal{G}} P_\eps(f).
$\exend
\end{remark}

\smallskip

We can now prove the main result in this section.

\begin{theorem}  \label{thm:char_nap}
Let $\mu \in \cM^\infty(G)$, let $U\subseteq G$ be an open precompact set, and let $\mathcal F \subseteq \cF_U$ be dense in $(\cF_U, \| \cdot \|_\infty)$. Then, $\mu$ is norm almost periodic if and only if $\mathcal{G}_{\cF}:=\{ \mu *g\ |\ g \in \cF \}$ is equi-Bohr almost periodic.

In particular, $\mu$ is norm almost periodic if and only if the family $\mathcal{G}:=\{ \mu *g\ |\ g \in \cF_U \}$ is equi-Bohr almost periodic.
\end{theorem}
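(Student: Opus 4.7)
The plan is to reduce the theorem to Lemma~\ref{C2} by a reflection argument. Lemma~\ref{C2} characterises $P^U_\eps(\mu)$ as $\bigcap_{g \in \cF} P_\eps(\mu*g^\dagger)$ for any dense $\cF \subseteq \cF_U$, whereas the theorem is phrased in terms of $\mu*g$, not $\mu*g^\dagger$. This mismatch is resolved by applying the lemma with the pair $(-U,\cF^\dagger)$ in place of $(U,\cF)$ and then invoking Lemma~\ref{lem:2} to pass between $\|\cdot\|_U$ and $\|\cdot\|_{-U}$.

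Concretely, I would first observe that $g \mapsto g^\dagger$ is an isometric bijection of $(\cF_U,\|\cdot\|_\infty)$ onto $(\cF_{-U},\|\cdot\|_\infty)$, so $\cF^\dagger := \{g^\dagger : g \in \cF\}$ is dense in $(\cF_{-U},\|\cdot\|_\infty)$ whenever $\cF$ is dense in $(\cF_U,\|\cdot\|_\infty)$. Applying Lemma~\ref{C2} with $U$ replaced by the open precompact set $-U$ and $\cF$ replaced by $\cF^\dagger$, then substituting $h=g^\dagger$ (so that $h^\dagger = g$), yields the key identity
\begin{displaymath}
P^{-U}_\eps(\mu) \,=\, \bigcap_{h \in \cF^\dagger} P_\eps(\mu*h^\dagger) \,=\, \bigcap_{g \in \cF} P_\eps(\mu*g).
\end{displaymath}
By Lemma~\ref{lem:2}, the norms $\|\cdot\|_U$ and $\|\cdot\|_{-U}$ on $\cM^\infty(G)$ are equivalent, so $\mu$ is norm almost periodic if and only if $P^{-U}_\eps(\mu)$ is relatively dense for every $\eps > 0$. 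Comparing with the definition of equi-Bohr almost periodicity then gives the desired equivalence.

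The only point that still needs attention is that the definition of equi-Bohr almost periodicity requires the family $\mathcal{G}_\cF$ to be contained in $\operatorname{SAP}(G)$, so in the ``only if'' direction I would verify $\mu*g \in \operatorname{SAP}(G)$. This is immediate from Proposition~\ref{P1} applied with $U$ replaced by $-U$: for $g \in \cF \subseteq C(G:U)$ one has $\|(T_t\mu - \mu)*g\|_\infty \leqslant \|T_t\mu - \mu\|_{-U}\,\|g\|_\infty$, hence $P^{-U}_\eps(\mu) \subseteq P_{\eps\|g\|_\infty}(\mu*g)$, and norm almost periodicity of $\mu$ forces $\mu*g$ to be Bohr almost periodic. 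The ``In particular'' statement follows by taking $\cF = \cF_U$, which is trivially dense in itself. There is no serious obstacle: the argument is essentially a repackaging of Lemma~\ref{C2} through the $\dagger$ symmetry, with Lemma~\ref{lem:2} supplying the independence of norm almost periodicity from the choice of precompact open set.
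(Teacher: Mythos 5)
Your argument is correct and is essentially the paper's own proof: the authors likewise reduce the theorem to Lemma~\ref{C2} via the identity $P_\eps(\mathcal{G}_{\cF})=\bigcap_{g\in\cF}P_\eps(\mu*g)=P^{-U}_\eps(\mu)$, leaving the $\dagger$-substitution, the appeal to Lemma~\ref{lem:2}, and the check that $\mu*g\in\operatorname{SAP}(G)$ implicit. Your write-up simply makes these routine details explicit.
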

\begin{proof}
This is an immediate consequence of Lemma~\ref{C2}. Indeed, we have
\begin{displaymath}
P_\eps(\mathcal{G}_{\cF})=\bigcap_{g \in \cF} P_\eps(\mu*g) = P_{\varepsilon}^{-U}(\mu) \,.   \qedhere
\end{displaymath}
\end{proof}

\smallskip

\begin{remark}
By combining Proposition~\ref{prop:char_sap} and Theorem~\ref{thm:char_nap}, the following are true for a measure $\mu\in\mathcal{M}^{\infty}(G)$.
\begin{enumerate}
\item[(i)] The measure $\mu$ is strongly almost periodic iff, for all $\varepsilon>0$, the set $P_{\varepsilon}(\mu*g)$ is relatively dense in $G$ for all $g\in \cF_U$.
\item[(ii)] The measure $\mu$ is norm almost periodic iff, for all $\varepsilon>0$, the set $\bigcap_{g\,\in\,\cF_U} P_{\varepsilon}(\mu*g)$ is relatively dense in $G$.
\end{enumerate}\exend
\end{remark}

We next use the results from this section to give simpler proofs for \cite[Prop.~6.2]{NS12} and \cite[Prop.~5.6]{LSS}.

\begin{prop} \cite[Prop.~6.2]{NS12}\label{p2} Let $\mu$ be a norm almost periodic measure and $\nu$ a finite measure. Then, $\mu*\nu$ is norm almost periodic.
\end{prop}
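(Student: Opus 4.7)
The plan is to reduce the problem to a direct norm estimate. Using the linearity of convolution, for any $t \in G$ one has $T_t(\mu*\nu) - \mu*\nu = (T_t\mu - \mu) * \nu$, so controlling $\|T_t(\mu*\nu) - \mu*\nu\|_U$ reduces to controlling the $U$-norm of a convolution of a translation bounded measure with the finite measure $\nu$.

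The key technical step will be to establish that for any $\sigma \in \cM^\infty(G)$ and any finite measure $\nu$, we have
\[
\|\sigma * \nu\|_U \leqslant |\nu|(G)\, \|\sigma\|_U \,.
\]
This follows from the standard pointwise inequality $|\sigma * \nu| \leqslant |\sigma| * |\nu|$, combined with the Fubini-type computation $(|\sigma|*|\nu|)(x+U) = \int_G |\sigma|(x+U-y)\, \dd|\nu|(y) \leqslant \|\sigma\|_U \cdot |\nu|(G)$, followed by taking the supremum over $x \in G$. As a byproduct, applying this with $\sigma = \mu$ also shows that $\mu*\nu$ is translation bounded, so the expression $\|T_t(\mu*\nu) - \mu*\nu\|_U$ is well defined.

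With this inequality in hand, the conclusion is immediate. We may assume $|\nu|(G) > 0$, since otherwise $\nu = 0$ and there is nothing to prove. Fix $\varepsilon > 0$ and apply the inequality with $\sigma = T_t\mu - \mu$ to get
\[
\|T_t(\mu*\nu) - \mu*\nu\|_U \leqslant |\nu|(G) \cdot \|T_t\mu - \mu\|_U \,,
\]
which shows that $P_{\varepsilon/|\nu|(G)}^U(\mu) \subseteq P_\varepsilon^U(\mu*\nu)$. Since $\mu$ is norm almost periodic, the set on the left is relatively dense, and hence so is $P_\varepsilon^U(\mu*\nu)$.

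I do not foresee a serious obstacle; the argument is essentially a one-line norm inequality. An alternative route via Theorem~\ref{thm:char_nap}, checking equi-Bohr almost periodicity of $\{(\mu*\nu)*g : g \in \cF_U\} = \{\mu*(\nu*g) : g \in \cF_U\}$, is possible but slightly less clean, because $\nu*g$ need not be compactly supported when $\nu$ is not, so one would still need to invoke an estimate of the above type to put the family into the equi-Bohr framework.
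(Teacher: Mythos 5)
Your proof is correct, but it takes a different route from the paper's. The paper stays entirely at the level of test functions: it invokes the norm formulas of Section~\ref{on norm} (Corollary~\ref{C1} and Lemma~\ref{C2}), so that norm almost periodicity of $\mu$ gives a relatively dense set of common $\tfrac{\eps}{|\nu|(G)+1}$-almost periods for the family $\{f*\mu \ |\ f\in\cF_U\}$, and then estimates $\|(f*\mu - T_t(f*\mu))*\nu\|_\infty \leqslant \|f*\mu - T_t(f*\mu)\|_\infty\, |\nu|(G)$ pointwise; this never requires taking the total variation of a convolution of measures. You instead work directly with the measure norm via the inequality $\|\sigma*\nu\|_U \leqslant |\nu|(G)\,\|\sigma\|_U$, which rests on $|\sigma*\nu|\leqslant|\sigma|*|\nu|$ and a Fubini computation; both of these are standard for a translation bounded $\sigma$ convolved with a finite $\nu$, and your argument is sound (including the observation that the same estimate shows $\mu*\nu\in\cM^\infty(G)$, so the statement makes sense). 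The two proofs encode the same $|\nu|(G)$-Lipschitz estimate, just expressed through the two equivalent descriptions of $\|\cdot\|_U$: yours is more self-contained as a one-line norm inequality and yields the quantitative inclusion $P^U_{\eps/|\nu|(G)}(\mu)\subseteq P^U_\eps(\mu*\nu)$ explicitly, while the paper's version showcases the test-function characterisation developed earlier and sidesteps any appeal to properties of total variation under convolution. Your closing remark about the alternative via Theorem~\ref{thm:char_nap} is essentially what the paper does; note that no compact support of $\nu*g$ is needed there, since the paper bounds $\|(f*\mu-T_t(f*\mu))*\nu\|_\infty$ directly rather than rewriting it as $\mu*(\nu*g)$.
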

\begin{proof}
Let $\eps>0$. Since $\mu$ is norm almost periodic, there is a relatively dense set $S$ such that $\| f*\mu-T_t (f*\mu) \|_\infty<\frac{\eps}{| \nu | (G)+1}$ for all $t\in S$ and $f \in \cF_U$.
Hence, for all $f \in \cF_U$ we have
\begin{align*}
\| f*(\mu*\nu)-T_t f*(\mu *\nu)\|_\infty
   & =\| (f*\mu-T_t (f*\mu)) *\nu\|_\infty \\
   &=\sup_{x \in G}\, \left| \int_G (f*\mu-T_t( f*\mu))(x-y)\ \dd \nu(y)
      \right| \\
   &\leqslant \sup_{x \in G}\, \int_G  \big|(f*\mu-T_t( f*\mu))(x-y) \big|\ \dd
      |\nu|(y) \\
   &\leqslant \| f*\mu-T_t (f*\mu) \|_\infty \, | \nu | (G) < \eps\,.
\end{align*}
Consequently, $\mu*\nu$ is norm almost periodic.
\end{proof}

Next, we give an alternate proof for the following result.

\begin{prop}\cite{LSS}
Let $\mu,\nu\in \mc{M}^{\infty}(G)$, and let $\mc{A}=(A_n)_{n\in\N}$ be a van Hove sequence such that $\mu\circledast_{\mc{A}}\nu$ exists\footnote{Recall that $\mu\circledast_{\mc{A}}\nu$ is defined as the vague limit of $\{ \frac{1}{|A_n|}\mu|_{A_n}*\nu_{A_n}\}$, if the limit exists. Given a van Hove sequence, the limit always exists along a subsequence \cite{LSS}.}. If $\mu$ is norm almost periodic, then $\mu\circledast_{\mc{A}}\nu$ is norm almost periodic.
\end{prop}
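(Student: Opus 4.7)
The plan is to deduce the norm almost periodicity of $\omega:=\mu\circledast_{\mc{A}}\nu$ from that of $\mu$ by a transfer of almost periods: I would show that there exists a constant $C>0$ (depending only on $\nu$ and $\mc{A}$) such that
\[
P^{U}_{\eps}(\mu) \;\subseteq\; P^{U}_{C\eps}(\omega) \qquad \text{for every } \eps>0,
\]
which gives relative density of $P^{U}_{C\eps}(\omega)$ and hence norm almost periodicity of $\omega$. The key enabling fact is lower semi-continuity of $\|\cdot\|_U$ under vague convergence: for each $g\in\cF_U$ and $t\in G$ one has $\omega(T_tg)=\lim_n\omega_n(T_tg)$, and taking suprema in Corollary~\ref{C1} yields $\|\omega\|_U \leqslant \liminf_n \|\omega_n\|_U$.

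Set $\omega_n:=|A_n|^{-1}\mu|_{A_n}*\nu|_{A_n}$, so $\omega_n\to\omega$ vaguely, and hence $T_t\omega_n - \omega_n \to T_t\omega - \omega$ vaguely. Using $T_t(\mu|_{A_n})=(T_t\mu)|_{A_n+t}$, I would split
\[
T_t\omega_n - \omega_n \;=\; \underbrace{\tfrac{1}{|A_n|}\,(T_t\mu - \mu)|_{A_n}*\nu|_{A_n}}_{M_n} \;+\; \underbrace{\tfrac{1}{|A_n|}\,\big[(T_t\mu)|_{A_n+t} - (T_t\mu)|_{A_n}\big]*\nu|_{A_n}}_{B_n}.
\]
For $M_n$, the sub-multiplicative bound $\|\alpha*\beta\|_U\leqslant\|\alpha\|_U\,|\beta|(G)$, the fact that restriction does not increase $\|\cdot\|_U$, and $t\in P_\eps^{U}(\mu)$ combine to give $\|M_n\|_U \leqslant \eps\cdot |\nu|(A_n)/|A_n|$. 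Translation boundedness of $\nu$ together with the van Hove property forces $C:=\sup_n|\nu|(A_n)/|A_n|<\infty$, so $\|M_n\|_U\leqslant C\eps$ uniformly in $n$.

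For $B_n$, the dual bound $\|\alpha*\beta\|_U\leqslant |\alpha|(G)\,\|\beta\|_U$ together with $|T_t\mu|(A_n+t)=|\mu|(A_n)$ yields
\[
\|B_n\|_U \;\leqslant\; \frac{|\mu|\big(A_n\triangle(A_n-t)\big)}{|A_n|}\cdot\|\nu\|_U.
\]
For fixed $t$, $A_n\triangle(A_n-t)\subseteq \partial^{K} A_n$ with $K=\{-t,0,t\}$ compact, so $|A_n\triangle(A_n-t)|/|A_n|\to 0$ by van Hove, and a standard covering argument using translation boundedness of $\mu$ upgrades this to $|\mu|(A_n\triangle(A_n-t))/|A_n|\to 0$. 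Hence $\|B_n\|_U\to 0$, and inserting into lower semi-continuity gives $\|T_t\omega-\omega\|_U\leqslant \liminf_n\|T_t\omega_n-\omega_n\|_U\leqslant C\eps$, which is the desired inclusion.

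The main obstacle is the boundary term $B_n$: it must vanish in $|\mu|$-mass, not just in Haar measure, and it must do so for each individual $t\in P_\eps^{U}(\mu)$ — this is exactly what the combination of the van Hove condition and translation boundedness of $\mu$ delivers, and it is the reason the argument must route through $\liminf_n$ rather than handle all $t$ simultaneously. The main-term constant $C$ is $t$-independent, which is precisely what makes the transfer of almost periods uniform across $\eps$.
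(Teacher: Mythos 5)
Your proof is correct, and it reaches the same transfer inequality as the paper --- namely $\|T_t\omega-\omega\|_U\leqslant c\,\|T_t\mu-\mu\|_U$ with $c=\sup_n|\nu|(A_n)/|A_n|$, so that $P^U_\eps(\mu)\subseteq P^U_{c\eps}(\omega)$ --- but by a genuinely different route. The paper works at the level of the functions $\omega*g$ for $g\in\cF_U$: it invokes Schlottmann's Lemma~1.1 to bound $\bigl((\omega-T_t\omega)*g\bigr)(x)$ by $\limsup_n\frac{1}{|A_n|}\int_{A_n}|(\mu*g-T_t(\mu*g))(x-s)|\,\dd|\nu|(s)$, which absorbs all boundary effects into the cited lemma, and then converts back to $\|\cdot\|_U$ via Corollary~\ref{C1}. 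You instead work at the level of the approximating measures $\omega_n$, make the boundary term explicit through the decomposition $T_t\omega_n-\omega_n=M_n+B_n$, kill $B_n$ for each fixed $t$ using $A_n\triangle(A_n-t)\subseteq\partial^{\{-t,0,t\}}A_n$ together with the standard fact that $|\mu|(\partial^KA_n)/|A_n|\to0$ for translation bounded $\mu$, and pass to the limit via vague lower semi-continuity of $\|\cdot\|_U$ (itself a consequence of Corollary~\ref{C1}). Your version is self-contained where the paper leans on the external lemma, at the price of having to justify two standard van Hove facts ($\sup_n|\nu|(A_n)/|A_n|<\infty$ and the $|\mu|$-mass version of the boundary condition); it also makes transparent why the estimate is only pointwise in $t$ (the boundary term vanishes non-uniformly), which the paper's formulation obscures. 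Both arguments yield the same quantitative conclusion.
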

\begin{proof}
By \cite[Lem. 1.1]{Martin2}, there is a constant $c>0$ such that, for all $g \in \cF_{U}$, we have
\begin{align*}
\|\left(\mu\circledast_{\mc{A}}\nu \right.&- \left. T_t(\mu\circledast_{\mc{A}}\nu)\right)*g\|_\infty \\
    &\leqslant \sup_{x \in G} \lims \frac{1}{|A_n|} \int_{A_n} \left| \big(\mu*g
         - T_t (\mu*g)\big) (x-t)\right|\  \dd |\nu|(t) \\
    &\leqslant  \| \mu* g  - T_t (\mu*g) \|_\infty\, \sup\Big\{\frac{ | \nu
       |(A_n)}{|A_n|} \ \Big|\ n\in\N \Big\}   \\
    &\leqslant c\, \|\mu-T_t\mu\|_{\infty} \,.
\end{align*}
Now, this and Corollary~\ref{C1} imply
\begin{align*}
\| \mu\circledast_{\mc{A}}\nu - T_t(\mu\circledast_{\mc{A}}\nu)\|_U
    &= \sup_{g\in\cF_U} \sup_{s\in G} \left| \big(\mu\circledast_{\mc{A}}\nu
       -T_t (\mu\circledast_{\mc{A}}\nu)\big)(T_sg) \right|  \\
    &= \sup_{g\in\cF_U} \sup_{s\in G} \left| \big(\big(\mu\circledast_{\mc{A}}
        \nu -T_t (\mu\circledast_{\mc{A}}\nu)\big)*g^{\dagger}\big)(s) \right|
        \\
    &\leqslant c\, \|\mu-T_t\mu\|_{\infty} \,,
\end{align*}
which finishes the proof.
\end{proof}

\smallskip
We next look at the completeness of $\NAP(G)$.

\begin{prop}
$\NAP(G)$ is closed in $(\cM^\infty(G), \| \cdot \|_U)$. In particular, $(\NAP(G), \| \cdot \|_U)$ is complete.
\end{prop}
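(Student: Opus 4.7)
The plan is to leverage the Banach space structure of $(\cM^\infty(G), \| \cdot \|_U)$ established in the corollary following Theorem~\ref{cm Banach}: since a closed subspace of a Banach space is itself Banach, the ``in particular'' clause will follow automatically once we show $\NAP(G)$ is closed in $(\cM^\infty(G), \| \cdot \|_U)$. So the real task is the closedness statement.

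To prove closedness, I would take a sequence $(\mu_n)_{n\in\N} \subseteq \NAP(G)$ with $\mu_n \Rightarrow \mu$ for some $\mu \in \cM^\infty(G)$, fix an arbitrary $\eps > 0$, and produce a relatively dense subset of $P_\eps^U(\mu)$. The classical ``$\eps/3$-trick'' does the job: pick $N$ so large that $\| \mu - \mu_N \|_U < \eps/3$, and consider the relatively dense set $P_{\eps/3}^U(\mu_N)$, which exists because $\mu_N \in \NAP(G)$. For $t$ in that set, I would estimate
\[
\| T_t \mu - \mu \|_U \leqslant \| T_t \mu - T_t \mu_N \|_U + \| T_t \mu_N - \mu_N \|_U + \| \mu_N - \mu \|_U.
\]
The middle term is bounded by $\eps/3$ by choice of $t$, and the third term is bounded by $\eps/3$ by choice of $N$. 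For the first term, I would use the translation invariance of $\| \cdot \|_U$: since $|T_t \nu|(x + U) = |\nu|(x - t + U)$ and the supremum over $x \in G$ is unchanged by a shift, we have $\| T_t(\mu - \mu_N) \|_U = \| \mu - \mu_N \|_U < \eps/3$. Adding up, $\| T_t \mu - \mu \|_U < \eps$, so $P_{\eps/3}^U(\mu_N) \subseteq P_\eps^U(\mu)$, and the latter is relatively dense.

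Since $\eps > 0$ was arbitrary, this shows $\mu \in \NAP(G)$ and hence $\NAP(G)$ is closed in $(\cM^\infty(G), \| \cdot \|_U)$. The completeness of $(\NAP(G), \| \cdot \|_U)$ then follows from the corollary after Theorem~\ref{cm Banach}. I don't anticipate a serious obstacle here: the only point that requires a moment of care is the translation invariance of $\| \cdot \|_U$, which is immediate from the definition $\| \nu \|_U = \sup_{x \in G} |\nu|(x + U)$.
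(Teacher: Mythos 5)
Your proof is correct and follows essentially the same route as the paper: the standard $\eps/3$-argument with the decomposition $\| T_t \mu - \mu \|_U \leqslant \| T_t(\mu - \mu_N) \|_U + \| T_t \mu_N - \mu_N \|_U + \| \mu_N - \mu \|_U$ and translation invariance of $\| \cdot \|_U$. Your write-up is in fact slightly more careful than the paper's, which leaves the translation-invariance step implicit.
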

\begin{proof}
Let $(\mu_n)_{n\in\N}$ be a sequence in $\NAP(G)$, and let $\mu \in \cM^\infty(G)$ be such that $\mu_n \to \mu$ in $\| \cdot \|_U$. Let $\eps>0$. Then, there exists some $n\in\N$ such that $\| \mu -\mu_n \|_U <\frac{\eps}{3}$.
Since $\mu_n \in \NAP(G)$ the set $P:= \{ t \in G \ |\ \| \mu_n - \mu \|_U < \frac{\eps}{3} \}$ is relatively dense. Moreover, for all $t \in P$ we have
\begin{align*}
\| T_t \mu - \mu \|_U
    &= \| T_t \mu - T_t \mu_n \|_U +\| T_t \mu_n - \mu_n \|_U +\|\mu_n - \mu \|_U   \\
    &<\frac{\eps}{3}+\frac{\eps}{3}+\frac{\eps}{3} \, = \, \eps \,.   \qedhere
\end{align*}
\end{proof}

\smallskip

However, note that $\NAP(G)$ is not a Banach space because it is not closed under addition.
Since the intersection of two closed subsets of a topological space is also closed, we get the following consequence.

\begin{coro}\label{C3} For each $\alpha \in \{ pp, ac, sc \}$ the set
\begin{displaymath}
\NAP_{\alpha}(G):= \NAP(G) \cap \cM_{\alpha}^\infty(G)
\end{displaymath}
is closed in $(\cM^\infty(G), \| \cdot \|_U)$.
\end{coro}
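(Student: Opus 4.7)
The plan is to observe that this is a routine consequence of two closure results already established in the excerpt, combined with the elementary topological fact that the intersection of two closed sets is closed. Concretely, I would invoke the proposition immediately preceding this corollary, which states that $\NAP(G)$ is closed in $(\cM^\infty(G), \| \cdot \|_U)$, together with Proposition~\ref{prop:1}, which establishes that $\cM^\infty_{\text{pp}}(G)$, $\cM^\infty_{\text{ac}}(G)$ and $\cM^\infty_{\text{sc}}(G)$ are closed in $(\cM^\infty(G), \| \cdot \|_U)$.

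Since $(\cM^\infty(G), \| \cdot \|_U)$ is a topological space (in fact a Banach space by Theorem~\ref{cm Banach} and Lemma~\ref{lem:2}), and the intersection of two closed subsets is closed, for each $\alpha \in \{\text{pp}, \text{ac}, \text{sc}\}$ the set
\[
\NAP_{\alpha}(G) = \NAP(G) \cap \cM^\infty_{\alpha}(G)
\]
is the intersection of two closed sets and hence closed. This immediately gives the claim.

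Since the statement is purely formal, there is no real obstacle: all the substantive work has already been done in Proposition~\ref{prop:1} (which uses Lemma~\ref{lem:3} and the Hewitt--Ross decomposition of the total variation) and in the preceding proposition on the completeness of $\NAP(G)$ (which uses a standard $\eps/3$ argument together with the triangle inequality for $\| \cdot \|_U$). The proof I would write is therefore a single sentence citing these two results and the general topological principle.
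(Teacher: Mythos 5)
Your proof is correct and is exactly the argument the paper uses: the corollary follows by intersecting the closed set $\NAP(G)$ (from the preceding proposition) with the closed set $\cM^\infty_{\alpha}(G)$ (from Proposition~\ref{prop:1}), since the intersection of two closed subsets of a topological space is closed. No differences worth noting.
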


\section{Strong almost periodicity and Lebesgue decomposition}\label{sap leb}

Recall that, by Corollary~\ref{coro:2}, given a measure $\mu \in \NAP(G)$ we have $\mu_{\text{pp}}, \mu_{\text{ac}}, \mu_{\text{sc}} \in \NAP(G)$. In this section, we show that the same does not hold for strong almost periodic measures. First, we will prove the following lemma, which will be our main tool for constructing examples.

\begin{lemma} Let $\mu_n, \mu$ be measures on $\R$ supported inside $[0,1]$, for all $n\in\N$, such that $\mu_n \xrightarrow{\pi} \mu$. Define
\begin{displaymath}
\omega:= \mu+\sum_{j=1}^\infty \left(\delta_{2^j+(2^{j+1}\Z)}\right) * \mu_j \,.
\end{displaymath}
Then, $\omega \in \SAP(\R)$.
\end{lemma}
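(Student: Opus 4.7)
The natural strategy is to peel off a periodic background from $\omega$ and treat the remainder as a uniformly convergent series once tested against any $f \in \Cc(\R)$.

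Using the partition of the even integers by the $2$-adic valuation, $2\Z = \{0\} \sqcup \bigsqcup_{j \geq 1}\bigl(2^j + 2^{j+1}\Z\bigr)$, one rewrites
\[
\omega \;=\; \delta_{2\Z} * \mu \;+\; \eta, \qquad \eta \;:=\; \sum_{j=1}^\infty \delta_{2^j + 2^{j+1}\Z} * (\mu_j - \mu).
\]
The measure $\delta_{2\Z}*\mu$ is periodic with period $2$, so strongly almost periodic; by linearity of $\SAP(\R)$ it suffices to prove $\eta \in \SAP(\R)$, i.e.\ that $f * \eta \in \SAP(\R)$ for every $f \in \Cc(\R)$.

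Fix such an $f$ with $\supp f \subseteq [-R, R]$ and consider the partial sums
\[
S_N \;:=\; \sum_{j=1}^N \delta_{2^j + 2^{j+1}\Z} * \bigl(f * (\mu_j - \mu)\bigr).
\]
Each summand is continuous and periodic with period $2^{j+1}$, hence Bohr almost periodic; as a finite sum, each $S_N$ is itself Bohr almost periodic. The core of the proof is the uniform convergence $S_N \to f * \eta$. Since $f * (\mu_j - \mu)$ is supported in $[-R, R+1]$, at a point $x \in \R$ a term indexed by $n \in 2^j + 2^{j+1}\Z$ contributes only when $n$ lies in a window around $x$ of length at most $2R + 2$. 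Choose $N_0$ with $2^{N_0+1} > 2R+2$; then for every $N \geq N_0$ and every $x \in \R$, this window contains at most one integer of $2$-adic valuation strictly greater than $N$ (the unique multiple of $2^{N+1}$ in it, if any). Consequently
\[
\bigl| f * \eta(x) - S_N(x) \bigr| \;\leq\; \sup_{j > N} \| f * (\mu_j - \mu) \|_\infty,
\]
uniformly in $x$. Since $\mu_j \xrightarrow{\pi} \mu$, the right-hand side tends to $0$ as $N \to \infty$.

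Because Bohr almost periodicity is preserved under uniform limits, $f * \eta \in \SAP(\R)$; as $f$ was arbitrary, $\eta \in \SAP(\R)$, and therefore $\omega = \delta_{2\Z}*\mu + \eta \in \SAP(\R)$. The delicate point is the uniform-in-$x$ control of the tail: it hinges on the sparseness (and nestedness) of the cosets $2^j + 2^{j+1}\Z$, which guarantees that from some $N$ on, any bounded window meets at most one coset of index exceeding $N$.
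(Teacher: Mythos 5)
Your proof is correct. It follows the same basic mechanism as the paper's --- partial sums of the series are periodic, and the tail is controlled via the product-topology convergence together with the sparseness of the high-valuation cosets $2^j+2^{j+1}\Z$ --- but the route is organised differently. The paper fixes $\eps$ and $f$, splits $\omega=\omega_N+(\omega-\omega_N)$ with $\omega_N:=\sum_{j=1}^N\delta_{2^j+2^{j+1}\Z}*\mu_j$ the $2^{N+1}$-periodic part, and then exhibits $2^{N+1}\Z$ directly as a relatively dense set of $\eps$-almost periods of $\omega*f$: the remainder is written as blocks indexed by $2^{N+1}\Z$, translation by $2^{N+1}k$ permutes the blocks, and any two blocks convolved with $f$ are $\eps$-close because each is close to $\mu*f$. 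You instead subtract the fully periodic background $\delta_{2\Z}*\mu$ via the partition $2\Z=\{0\}\sqcup\bigsqcup_{j\geqslant 1}(2^j+2^{j+1}\Z)$, which telescopes the series into $\sum_j\delta_{2^j+2^{j+1}\Z}*(\mu_j-\mu)$; convolved with $f$, the tail of this series is then genuinely small in the sup norm (at most one high-valuation coset meets a bounded window), so you can invoke the closedness of Bohr almost periodic functions under uniform limits rather than track almost periods by hand. Your version buys a cleaner structural statement ($f*\eta$ is a uniform limit of continuous periodic functions) at the small price of having to justify that the rearranged series defining $\delta_{2\Z}*\mu$ and $\eta$ are locally finite, hence well-defined measures --- which holds for exactly the same sparseness reason and is worth a sentence; the paper's version avoids any rearrangement and has the minor virtue of producing the explicit almost periods $2^{N+1}\Z$.
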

\begin{proof}

Let $f \in \Cc(\R)$, and let $\eps >0$. Let $M \in \N$ be such that $\supp(f) \subseteq [-M,M]$.
Since the sequence $(\mu_j)_{j\in\N}$ converges in the product topology to $\mu$, there exists some $N\in\N$ such that, for all  $j >N$, we have
\begin{equation}\label{EQ44}
\| \mu_j*f- \mu*f \|_\infty < \frac{\eps}{4M+2} \,.
\end{equation}
In particular, for all $ j,m >N$, we have
\begin{equation}\label{EQ55}
\| \mu_j*f- \mu_m*f \|_\infty < \frac{\eps}{2M+1} \,.
\end{equation}
Next, we show that every element of $2^{N+1}\Z$ is an $\eps$-almost period for $\omega*f$.

Define
\begin{displaymath}
\omega_N:= \sum_{j=1}^N \delta_{2^j+(2^{j+1}\Z)} * \mu_j \,.
\end{displaymath}
Then, $\omega_N$ is $2^{N+1}\Z$ periodic. Therefore, to show that $2^{N+1}\Z$ are $\eps$-almost periods for $\omega*f$, it suffices to show that $2^{N+1}\Z$ are $\eps$-almost periods for $(\omega-\omega_N)*f$. Now,
\begin{displaymath}
\omega- \omega_N= \mu+\sum_{j=N+1}^\infty \delta_{2^j+(2^{j+1}\Z)} * \mu_j \,.
\end{displaymath}
Next, if we define
\begin{displaymath}
\omega_n :=
\begin{cases}
\mu_{v_2(n)}, & \text{if } n\in\Z\setminus \{0\},  \\
\mu, & \text{if } n=0,
\end{cases}
\end{displaymath}
where $v_2(n)$ is the $2$-adic valuation of $n$, we can write
\begin{displaymath}
\omega- \omega_N = \sum_{n \in \Z} \delta_{2^{N+1}\cdot n} * \omega_n \,.
\end{displaymath}
Then, for all $k \in \Z$, we have
\begin{align*}
(\omega- \omega_N)*f - T_{2^{N+1}\cdot k} \left( (\omega- \omega_N)*f \right)
    & =\sum_{n \in \Z} \delta_{2^{N+1} \cdot n} * \omega_n *f- T_{2^{N+1}\cdot k}
       \left( \sum_{n \in \Z} \delta_{2^{N+1} \cdot n}* \omega_n *f\right) \\
    &=\sum_{n \in \Z} \delta_{2^{N+1} \cdot n} *\left(\omega_n *f-
       \omega_{n-k}*f \right)
\end{align*}
Now, by Eqs.~\eqref{EQ44} and \eqref{EQ55}, we have
\begin{displaymath}
\|  \omega_n *f- \omega_{n-k}*f  \|_\infty < \frac{\eps}{2M+1} \,.
\end{displaymath}
Using the fact that $\supp(f) \subseteq [-M,M]$, we immediately get
\begin{displaymath}
\| (\omega- \omega_N)*f - T_{2^N\cdot k} \left( (\omega- \omega_N)*f \right) \|_\infty <\eps \,.
\end{displaymath}
This completes the proof.
\end{proof}

\smallskip

Now, Examples~\ref{ex1},~\ref{ex2},~\ref{ex5},~\ref{ex6} and Example~\ref{ex7} yield the following example of strongly almost periodic measures for which the components of the Lebesgue decomposition are not strongly almost periodic.

\begin{example}\label{ex3} Let
\begin{displaymath}
\mu:= \lm_{[0,1]}+\sum_{j=1}^\infty \left(\delta_{2^j+(2^{j+1}\Z)} * \bigl( \frac{1}{j} \sum_{k=0}^{j-1} \delta_{\frac{k}{j}} \bigr)\right)
\end{displaymath}
Then, $\mu \in \SAP(\R)$. \exend
\end{example}

\smallskip

\begin{example}\label{ex9}
Let $\nu$ be a singular continuous measure supported inside $[0,1]$. Define
\begin{align*}
\mu(f):&= \int_0^1 f(2x)\ \dd (\nu*\lm_{[0,1]})(x)\,, \\
\mu_n(f):&= \int_0^1 f(2x)\ \dd \big(\nu*(\frac{1}{n}\sum_{k=1}^n \delta_{\frac{k}{n}})\big)(x)\,.
\end{align*}
Then, $(\mu_n)_{n\in\N}$ is a sequence of singular continuous measures supported inside $[0,1]$ which, by Example~\ref{ex2}, converge in the product topology to the absolutely continuous measure $\mu$. As $\supp(\mu)\subseteq [0,1]$, it follows that
\begin{displaymath}
\omega:= \mu+\sum_{j=1}^\infty \delta_{2^j+(2^{j+1}\Z)} * \mu_j \in \SAP(\R) \,.
\end{displaymath}\exend
\end{example}

\begin{example}\label{ex8}
For all $n\in\N$, let
\begin{displaymath}
f_n(x):= \max \{ n-n^2\,|x| , 0 \}\quad \text{ and } \quad \mu_n:=f_n\,\lm\,.
\end{displaymath}
Then, it is trivial to see that $(f_n)_{n\in\N}$ is an approximate identity for the convolution on $\R$. Therefore, by Example~\ref{ex2}, we have
\begin{displaymath}
\mu= \delta_0+ \sum_{j=1}^\infty \delta_{2^j+(2^{j+1}\Z)} * \mu_j \in \SAP(\R) \,.
\end{displaymath}
This is a measure with non trivial pure point and absolutely continuous components, and trivial singular continuous component. Since $\mu_{\text{pp}}=\delta_0$, neither $\mu_{\text{pp}}$ note $\mu_{\text{ac}}$ is strongly almost periodic.\exend
\end{example}

\begin{example}
Let $\nu$ be a singular continuous measure supported inside $[0,1]$, and let $f_n$ be as in Example \ref{ex8}. Define
\begin{align*}
\mu(f):&= \int_0^1 f(2x)\ \dd \nu (x)\,, \\
\mu_n(f):&= \int_0^1 f(2x)\ \dd (\nu*f_n)(x)\,.
\end{align*}
Then, $(\mu_n)_{n\in\N}$ is a sequence of absolutely continuous measures supported inside $[0,1]$ which, by Example~\ref{ex5}, converges in the product topology to the singular continuous measure $\nu$. As $\supp(\nu)\subseteq [0,\frac{1}{2}]\subseteq [0,1]$, it follows that
\begin{displaymath}
\omega:= \mu+\sum_{j=1}^\infty \delta_{2^j+(2^{j+1}\Z)} * \mu_j \in \SAP(\R) \,.
\end{displaymath}
\end{example}\exend

\begin{example}
Let $\mu$ be any singular continuous probability measure on $\R$ supported inside $[0,1]$.
Define $\mu_n$ by
\begin{displaymath}
\mu_n(f)=\int_{\R} f(nx)\ \dd \mu(x) \,,
\end{displaymath}
for all $n\in\N$. Then, $\mu_n$ is a singular continuous probability measure supported inside $[0, \frac{1}{n}]$, and by Example~\ref{ex7}
\begin{displaymath}
\omega:= \delta_0 +\sum_{j=1}^\infty \delta_{2^j+(2^{j+1}\Z)} * \mu_j \in \SAP(\R) \,.
\end{displaymath}
\end{example}\exend

\begin{example} Let $\mu$ be the measure from Example~\ref{ex3} and $\omega$ be the measure from Example~\ref{ex9}. Define
\begin{displaymath}
\nu:= \mu+\omega \,.
\end{displaymath}
Then $\nu \in \SAP(G)$ but it follows from Examples~\ref{ex3},~\ref{ex9} that neither $\nu_{\text{pp}}$ nor $\nu_{\text{sc}}$ are strongly almost periodic. Moreover, $\nu_{\text{ac}}$ has compact support, and hence it is not strongly almost periodic either.\exend
\end{example}

\section{Norm almost periodic measures of spectral purity}\label{nap sp}

Here, we briefly look at each of the sets $\NAP_{\text{pp}}(G)$, $\NAP_{\text{ac}}(G)$ and $\NAP_{\text{sc}}(G)$.

\subsection{On absolutely continuous norm almost periodic measures}

First, we give a characterisation of norm almost periodicity for absolutely continuous measures in terms of $L^1$-Stepanov almost periodicity.

Let us first recall that a function $f \in L^1_{\text{loc}}(\R)$ is called \textbf{Stepanov almost periodic} if, for each $\eps >0$, the set
\begin{displaymath}
\Big\{ t\in \R\ \big|\ \sup_{x \in \R} \int_{x}^{x+1} \left| f(s)- f(s-t) \right|\ \dd s  \leqslant \eps \Big\}
\end{displaymath}
is relatively dense. It is well known that working over intervals of arbitrary length does not change the class of Stepanov almost periodic functions \cite{Che}.

Let us first extend this definition to arbitrary locally compact Abelian groups.

\begin{definition}
Let $G$ be a LCAG, and let $U \subseteq G$ be any non-empty precompact open set. A function $f \in L^1_{\text{loc}}(G)$ is called \textbf{$L^1$-Stepanov almost periodic} (with respect to $U$) if, for each $\eps >0$, the set
\begin{displaymath}
\Big\{ t\in G\ \big|\ \sup_{x \in G} \frac{1}{|U|} \int_{x+U} \left| f(s)- f(s-t) \right|\ \dd s  \leqslant \eps \Big\}
\end{displaymath}
is relatively dense.
\end{definition}

\begin{remark}
Each non-empty precompact open set $U$  defines a norm $\| \cdot \|_U$ on the space $BL^1_{\text{loc}}(G):= L^1_{\text{loc}}(G) \cap \cM^\infty(G)$ via
\begin{displaymath}
\| f\|_U :=\sup_{x \in G} \frac{1}{|U|}\int_{x+U} \left| f (s) \right|\ \dd s \,.
\end{displaymath}
An immediate computation shows that $BL^1_{\text{loc}}(G)= \{ f \in  L^1_{\text{loc}}(G) \ |\ \| f \|_U <\infty \}$. Moreover, any $L^1$-Stepanov almost periodic function belongs to $BL_{\text{loc}}^1(G)$, see \cite{Spi}.

It is easy to see that different precompact open sets define equivalent norms, and that a function $f \in  BL^1_{\text{loc}}(G)$ is $L^1$-Stepanov almost periodic if and only if, for each $\eps >0$, the set
\begin{displaymath}
\{ t \in G\ |\ \| f- T_tf \|_U \leqslant \eps \}
\end{displaymath}
is relatively dense.

Also, we will see below that the norm $\| f\|_U$ we defined here is just the measure norm $\| f\, \theta_G \|_U$.

For more details on Stepanov almost periodic functions on LCAG see \cite{Spi}.\exend
\end{remark}

\begin{lemma}\label{L3} \cite[Sec. 13.16.3]{Die}
Let $f \in L^1_{\text{\text{loc}}}(G)$ be arbitrary. Then, one has
\begin{displaymath}
\left| f\, \theta_G \right| = \left| f \right|\, \theta_G \,.
\end{displaymath}
In particular, we obtain
\begin{displaymath}
\| f\, \theta_G \|_U =  \sup_{x \in G} \int_{x+U} \left| f (s) \right|\ \dd s \,.
\end{displaymath}
\end{lemma}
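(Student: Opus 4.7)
The plan is to reduce the second identity to the first by a direct application of the definition of the measure norm, and to prove the identity $|f\,\theta_G| = |f|\,\theta_G$ by showing both measures agree on any nonnegative $g \in \Cc(G)$ via the supremum formula for the total variation.

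Both $|f\,\theta_G|$ and $|f|\,\theta_G$ are positive Radon measures, so it suffices to prove equality of $|f\,\theta_G|(g)$ and $\int g\,|f|\,\dd\theta_G$ for every nonnegative $g \in \Cc(G)$. The inequality $|f\,\theta_G|(g) \leqslant \int g\,|f|\,\dd\theta_G$ is immediate from the definition recalled in the paper: for any $h \in \Cc(G)$ with $|h| \leqslant g$,
\[
\bigl|(f\,\theta_G)(h)\bigr| = \left|\int h f\,\dd\theta_G\right| \leqslant \int |h|\,|f|\,\dd\theta_G \leqslant \int g\,|f|\,\dd\theta_G,
\]
and taking the supremum over such $h$ yields the bound.

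For the reverse inequality, I would introduce the measurable unimodular ``sign'' function $\phi(x) := \overline{f(x)}/|f(x)|$ where $f(x) \neq 0$ and $\phi(x) := 0$ otherwise. Then $|\phi g| \leqslant g$, and $\phi f = |f|$ pointwise, so the bounded measurable function $\phi g$ satisfies $\int (\phi g)\,f\,\dd\theta_G = \int g\,|f|\,\dd\theta_G$. The obstacle is that $\phi g$ is merely measurable, not continuous, and the supremum in Lemma~\ref{lem:1} is restricted to $h \in \Cc(G)$. I would overcome this by invoking Lusin's theorem (or, equivalently, the density of $\Cc(G)$ in $L^1$ of the finite measure $g\,|f|\,\theta_G$, which has compact support since $g$ does): for every $\eps > 0$ there exists $h \in \Cc(G)$ with $\|h\|_\infty \leqslant \|\phi g\|_\infty \leqslant \|g\|_\infty$ and $h = \phi g$ outside a set on which $\int |f|\,\dd\theta_G < \eps$. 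After multiplying $h$ by a cutoff from $\Cc(G)$ equal to $1$ on $\supp(g)$ and scaling to ensure $|h| \leqslant g$ (which only costs a further vanishing error), one obtains $|(f\,\theta_G)(h)| \geqslant \int g\,|f|\,\dd\theta_G - C\eps$ for a constant $C$ depending only on $\|g\|_\infty$ and $\|f\|_{L^1(\supp g)}$. Since $\eps$ is arbitrary, this gives $|f\,\theta_G|(g) \geqslant \int g\,|f|\,\dd\theta_G$ and completes the proof of the first identity.

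The second identity is then immediate: by Corollary~\ref{C1} (and the definition of $\|\cdot\|_U$ carried over to precompact open sets via Lemma~\ref{lem:1}), one has $\|f\,\theta_G\|_U = \sup_{x \in G} |f\,\theta_G|(x+U)$. Substituting the first identity, this equals
\[
\sup_{x \in G} (|f|\,\theta_G)(x+U) = \sup_{x \in G} \int_{x+U} |f(s)|\,\dd s,
\]
which is the claimed formula. The only delicate step is the Lusin-type approximation in the second paragraph; everything else is a routine manipulation of the definitions already developed in Section~\ref{on norm}.
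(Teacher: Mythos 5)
The paper gives no proof of this lemma at all: it is simply quoted from \cite[Sec.~13.16.3]{Die}, so your argument should be judged as a self-contained replacement for that citation rather than compared against an in-paper proof. Your strategy is the standard one and is essentially sound: both sides are positive measures, so it suffices to test against nonnegative $g\in\Cc(G)$; the inequality $|f\,\theta_G|(g)\leqslant\int g\,|f|\ \dd\theta_G$ is immediate from the defining supremum of the total variation, the reverse inequality follows by approximating the ``polar'' function $\phi g$ (where $\phi f=|f|$) by elements of $\Cc(G)$ dominated by $g$ using Lusin's theorem for the finite measure $|f|\,\theta_G$ restricted to $\supp(g)$, and the norm formula then falls out of the definition $\|\nu\|_U=\sup_{x\in G}|\nu|(x+U)$. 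One step needs a small repair: after Lusin's theorem you have $h\in\Cc(G)$ with $\|h\|_\infty\leqslant\|g\|_\infty$ agreeing with $\phi g$ off a set of small $|f|\,\theta_G$-measure, but neither multiplying by a cutoff nor scaling down forces the required pointwise bound $|h|\leqslant g$, since the bound can fail where $g$ is small but positive. The clean fix is a truncation: replace $h$ by $h':=h\cdot\min\{1,\,g/|h|\}$ (interpreted as $h$ wherever $|h|\leqslant g$ and as $0$ where $h=0$), which is continuous, satisfies $|h'|\leqslant g$, and still agrees with $\phi g$ wherever $h$ does because $|\phi g|\leqslant g$; the error estimate $\bigl|\int h'f\ \dd\theta_G-\int g\,|f|\ \dd\theta_G\bigr|\leqslant 2\|g\|_\infty\eps$ then goes through unchanged. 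With that one-line correction your proof is complete and provides exactly the content the paper delegates to Dieudonn\'e.
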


The following is an immediate consequence of Proposition~\ref{prop:1}.

\begin{lemma}
The mapping $f \mapsto f \theta_G$ is an homomorphism between $(BL^1_{\text{loc}}(G), \| \cdot \|_U)$ and $(\cM^\infty_{\text{ac}}(G), \| \cdot \|_U)$. Moreover, one has
\[
\|f\, \theta_G\|_U = |U|\, \|f\|_U \,.
\]
In particular, $(BL^1_{\text{loc}}(G), \| \cdot \|_U)$ is a Banach space.
\end{lemma}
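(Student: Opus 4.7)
The plan is to assemble the three claims out of the pieces already developed. First I would verify that the map is well-defined and linear: linearity of $f \mapsto f\, \theta_G$ is immediate from the definition, and if $f \in BL^1_{\text{loc}}(G)$, then $f\, \theta_G$ is absolutely continuous by construction, and translation-bounded once the norm identity is established. So the bulk of the work is the norm identity, from which well-definedness, injectivity, and the Banach space conclusion will follow.

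For the norm identity, I would compute
\[
\| f\, \theta_G \|_U \;=\; \sup_{x \in G} |f\, \theta_G|(x+U) \;=\; \sup_{x \in G} \int_{x+U} |f(s)|\, \dd s \;=\; |U|\, \| f \|_U,
\]
where the middle equality is Lemma~\ref{L3} and the last is just the definition of $\| \cdot \|_U$ on $BL^1_{\text{loc}}(G)$. This shows simultaneously that $\| f \|_U < \infty$ implies $\| f\, \theta_G \|_U < \infty$ (so the map lands in $\cM^\infty_{\text{ac}}(G)$), and that $f \mapsto f\, \theta_G$ is a scaled isometry with scaling factor $|U|$.

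Next I would address surjectivity onto $\cM^\infty_{\text{ac}}(G)$: any $\mu \in \cM^\infty_{\text{ac}}(G)$ admits, by Radon--Nikodym, a density $f \in L^1_{\text{loc}}(G)$ with $\mu = f\, \theta_G$, and the norm identity forces $\| f \|_U = |U|^{-1} \| \mu \|_U < \infty$, so $f \in BL^1_{\text{loc}}(G)$. Injectivity is standard: if $f\, \theta_G = 0$, the norm identity (or Lemma~\ref{L3} directly) gives $\int_{x+U} |f|\, \dd s = 0$ for every $x$, hence $f = 0$ almost everywhere.

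Finally, for the Banach space assertion, I would transfer completeness through the scaled isometry. By Proposition~\ref{prop:1}, $(\cM^\infty_{\text{ac}}(G), \| \cdot \|_U)$ is a Banach space. If $(f_n)$ is a Cauchy sequence in $(BL^1_{\text{loc}}(G), \| \cdot \|_U)$, then $(f_n\, \theta_G)$ is Cauchy in $\cM^\infty_{\text{ac}}(G)$ and converges in norm to some $\mu \in \cM^\infty_{\text{ac}}(G)$; pulling $\mu$ back through the bijection yields an $f \in BL^1_{\text{loc}}(G)$ with $f_n \to f$ in $\| \cdot \|_U$. No step here looks like a genuine obstacle; the only care needed is to cite the correct Radon--Nikodym style identification to ensure the map is surjective, since everything else reduces to bookkeeping built on Lemma~\ref{L3} and Proposition~\ref{prop:1}.
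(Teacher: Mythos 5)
Your proposal is correct and follows exactly the route the paper intends: the paper states this lemma without proof, calling it an immediate consequence of Proposition~\ref{prop:1}, and your argument simply fills in the details using precisely the intended ingredients — Lemma~\ref{L3} for the identity $\|f\,\theta_G\|_U = |U|\,\|f\|_U$, Radon--Nikodym for surjectivity onto $\cM^\infty_{\text{ac}}(G)$, and the completeness of $(\cM^\infty_{\text{ac}}(G),\|\cdot\|_U)$ transferred back through the scaled isometry. No gaps.
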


As an immediate consequence, we get the following result.

\begin{theorem}\label{T1}
An absolutely continuous translation bounded measure $\mu=f\,\theta_G$ is norm almost periodic if and only if its density function $f \in L^1_{\text{loc}}(G)$ is $L^1$-Stepanov almost periodic.

The mapping $f \mapsto f \theta_G$ is an isomorphism between the Banach spaces $(\mathcal{S}, \| \cdot \|_U)$ and $(\NAP_{\text{ac}}(G), \| \cdot \|_U)$, where
\begin{displaymath}
\mathcal{S}:= \{ f \in L^1_{\text{loc}}(G) \ |\ f \mbox{ is } L^1 \mbox{-Stepanov almost periodic} \} \,.
\end{displaymath}
\end{theorem}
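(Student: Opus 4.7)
The plan is to reduce the theorem to the preceding two lemmas by noting that translation commutes with the assignment $f \mapsto f\,\theta_G$. Specifically, by translation invariance of the Haar measure, for any $f \in L^1_{\text{loc}}(G)$ and any $t \in G$ one has $T_t(f\,\theta_G) = (T_tf)\,\theta_G$. Combined with the linearity of $f \mapsto f\,\theta_G$ and the identity $\|h\,\theta_G\|_U = |U|\,\|h\|_U$ from the preceding lemma, this yields
\[
\|f\,\theta_G - T_t(f\,\theta_G)\|_U \,=\, \|(f-T_tf)\,\theta_G\|_U \,=\, |U|\,\|f - T_tf\|_U \,.
\]

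From this identity, the forward direction is immediate: if $\mu = f\,\theta_G$ is norm almost periodic, then for each $\eta > 0$ the set $P_{\eta|U|}^{U}(\mu)$ is relatively dense, but by the identity this set equals $\{ t \in G \ |\ \|f - T_tf\|_U \leqslant \eta\}$, which is precisely a set of $\eta$-Stepanov almost periods of $f$. Since $\eta$ is arbitrary, $f$ is $L^1$-Stepanov almost periodic. The reverse direction is the same argument run backwards: given $\eps > 0$, choose $\eta = \eps/|U|$ and the relatively dense set of $\eta$-Stepanov almost periods of $f$ coincides with $P_\eps^U(\mu)$.

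For the Banach space isomorphism statement, I would observe the following. The map $\Phi: f \mapsto f\,\theta_G$ is linear, and by the first part it maps $\mathcal{S}$ into $\NAP_{\text{ac}}(G)$. The identity $\|\Phi(f)\|_U = |U|\,\|f\|_U$ shows $\Phi$ is bounded with bounded inverse (on its image), so it is a topological isomorphism onto its image. Injectivity is immediate because $f\,\theta_G = 0$ forces $f = 0$ in $L^1_{\text{loc}}(G)$. For surjectivity, let $\mu \in \NAP_{\text{ac}}(G)$; by the Radon--Nikodym theorem applied to $\mu$ (which is translation bounded and absolutely continuous, hence locally finite with respect to $\theta_G$), there is a unique $f \in L^1_{\text{loc}}(G)$ with $\mu = f\,\theta_G$, and the first part of the theorem guarantees that $f$ is $L^1$-Stepanov almost periodic, i.e.\ $f \in \mathcal{S}$.

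The only step requiring any care is the verification $T_t(f\,\theta_G) = (T_tf)\,\theta_G$, which is a short computation using translation invariance of $\theta_G$: for $g \in \Cc(G)$,
\[
\bigl(T_t(f\,\theta_G)\bigr)(g) = \int_G f(x)\,g(x+t)\,\dd\theta_G(x) = \int_G f(y-t)\,g(y)\,\dd\theta_G(y) = \bigl((T_tf)\,\theta_G\bigr)(g) \,.
\]
Once this is in hand, everything else reduces mechanically to the preceding lemma giving $\|\,\cdot\,\theta_G\|_U = |U|\,\|\cdot\|_U$. The main conceptual point is simply that Stepanov almost periodicity is by design the ``measure norm'' notion of almost periodicity transferred to densities, so once the two norms are related by a constant factor the equivalence of $\eps$-almost periods becomes trivial.
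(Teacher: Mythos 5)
Your proposal is correct and follows essentially the same route as the paper: the paper's own proof simply invokes the preceding lemma giving $\|f\,\theta_G\|_U = |U|\,\|f\|_U$ (via $|f\,\theta_G| = |f|\,\theta_G$) and declares the remaining claims obvious, and your argument spells out exactly those "obvious" steps — the commutation $T_t(f\,\theta_G)=(T_tf)\,\theta_G$, the resulting identification of $\eps$-almost periods, and surjectivity via Radon--Nikodym.
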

\begin{proof} First, let us note that $\mathcal{S}$ is a vector space by \cite{Spi}.

It is easy to see that the above mapping is linear and onto, and therefore $\NAP_{\text{ac}}(G)$ is a vector space, which is complete by Corollary~\ref{C3}. The rest of the claims are now obvious.
\end{proof}

Finally, for measures with uniformly continuous and bounded Radon--Nikodym density, we get the following simple characterisation.

\begin{prop} Let $f \in \Cu(G)$, and let $\mu:= f\, \theta_G$. Then, the following statements are equivalent:
\begin{itemize}
  \item [(i)] $\mu$ is norm almost periodic,
  \item [(ii)] $\mu$ is strongly almost periodic,
  \item [(iii)] $f$ is Bohr almost periodic,
  \item [(iv)] $f$ is Stepanov almost periodic.
\end{itemize}
\end{prop}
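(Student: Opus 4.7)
The plan is to close the cycle of implications $(i) \Rightarrow (ii) \Rightarrow (iii) \Rightarrow (iv) \Rightarrow (i)$. The equivalence of $(i)$ and $(iv)$ is already delivered by Theorem~\ref{T1}, and the implication $(i) \Rightarrow (ii)$ is the general fact, recalled in the introduction, that norm almost periodicity implies strong almost periodicity for any translation bounded measure. (One sees this directly from Theorem~\ref{thm:char_nap} together with Proposition~\ref{prop:char_sap}, since equi-Bohr almost periodicity of $\{\mu*g : g \in \cF_U\}$ forces in particular the Bohr almost periodicity of each individual $\mu*g$.) So the real work lies in $(ii)\Rightarrow(iii)$ and $(iii)\Rightarrow(iv)$.

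For $(iii)\Rightarrow(iv)$, I would simply note that for any $t \in G$,
\begin{displaymath}
\|T_tf - f\|_U \, = \, \sup_{x\in G}\frac{1}{|U|}\int_{x+U}\!\!|f(s-t)-f(s)|\,\dd s \, \leqslant \, \|T_tf-f\|_\infty,
\end{displaymath}
so every Bohr $\eps$-almost period of $f$ is a Stepanov $\eps$-almost period. Relative density transfers immediately, which is the content of $(iv)$.

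The one nontrivial step is $(ii)\Rightarrow(iii)$. Observe that for $\mu = f\,\theta_G$ and $g \in \Cc(G)$, a straightforward change of variables gives $\mu*g = f*g$. Hence strong almost periodicity of $\mu$ supplies that $f*g$ is Bohr almost periodic for every $g \in \Cc(G)$. Now pick a sequence $(g_n)_{n\in\N}$ of non-negative functions in $\Cc(G)$ with $\int g_n = 1$ and $\supp(g_n) \subseteq B_{1/n}(0)$, as in Example~\ref{ex2}. Since $f$ lies in $\Cu(G)$, the standard estimate
\begin{displaymath}
\|f*g_n - f\|_\infty \, \leqslant \, \sup_{x\in G}\int_{B_{1/n}(0)}|f(x-u)-f(x)|\,g_n(u)\,\dd u
\end{displaymath}
together with uniform continuity yields $\|f*g_n - f\|_\infty \to 0$. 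As each $f*g_n$ is Bohr almost periodic and the space of Bohr almost periodic functions is closed in $(\Cu(G), \|\cdot\|_\infty)$, the uniform limit $f$ is itself Bohr almost periodic, which is $(iii)$.

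No single step is substantially hard: the heavy lifting in $(i)\Leftrightarrow(iv)$ is already packaged in Theorem~\ref{T1}, and the only genuine content added here is the approximate-identity argument for $(ii)\Rightarrow(iii)$, whose only subtlety is the use of uniform continuity of $f$ to obtain uniform, rather than merely pointwise or $L^1_{\text{loc}}$, convergence of the mollified functions.
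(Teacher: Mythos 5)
Your proof is correct, and it takes a genuinely different route from the paper. The paper disposes of the proposition almost entirely by citation: (i)$\iff$(iv) is Theorem~\ref{T1}, (ii)$\iff$(iii) is quoted from \cite[Prop.~4.10.5~(i)]{MoSt}, and (i)$\iff$(iii) from \cite[Prop.~5.4.6]{NS11}; no cycle of implications is constructed. You instead close the cycle (i)$\Rightarrow$(ii)$\Rightarrow$(iii)$\Rightarrow$(iv)$\Rightarrow$(i) using only results internal to the paper plus standard facts: (i)$\Rightarrow$(ii) falls out of Theorem~\ref{thm:char_nap} combined with Proposition~\ref{prop:char_sap}; (iii)$\Rightarrow$(iv) is the trivial domination $\|T_tf-f\|_U\leqslant\|T_tf-f\|_\infty$ of the Stepanov seminorm by the sup norm; and (ii)$\Rightarrow$(iii) is a mollification argument, where the identity $\mu*g=f*g$, the uniform continuity of $f$, and the closedness of the Bohr almost periodic functions in $(\Cu(G),\|\cdot\|_\infty)$ yield that $f$ is a uniform limit of the Bohr almost periodic functions $f*g_n$. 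The only external input you rely on beyond the paper is that closedness, which is classical. What your approach buys is self-containedness and a transparent view of which implications are cheap (all of them, here, given the machinery of Sections~\ref{on norm}--\ref{nap sp}); what the paper's approach buys is brevity, at the cost of sending the reader to two other sources. Both are logically complete.
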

\begin{proof} (i)$\iff$(iv): This is Theorem~\ref{T1}.

\smallskip

\noindent (ii)$\iff$(iii): This follows from \cite[Prop.~4.10.5 (i)]{MoSt}.

\smallskip

\noindent (i)$\iff$(iii): This follows from \cite[Prop.~5.4.6]{NS11}.
\end{proof}

\subsection{Pure point norm almost periodic measures}

The pure point norm almost periodic measures are well understood due to the following characterisation.

\begin{theorem}\label{t1}\cite{NS11,NS12}
\begin{enumerate}
 \item[(i)]Let $\mu$ be a pure point  norm almost periodic measure. Then, there exists a CPS $(G, H, \cL)$ and a continuous function $h \in \Cz(H)$ such that
\begin{displaymath}
\mu = \sum_{(x,x^\star)\, \in\, \cL} h(x^\star)\, \delta_x =: \omega_h \,.
\end{displaymath}
\item[(ii)] Let $(G, \R^d, \cL)$ be a CPS and $h \in \mathcal{S}(\R^d)$. Then, $\omega_h$ is a norm almost periodic measure.
\end{enumerate}
\end{theorem}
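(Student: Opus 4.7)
The plan is to treat the two directions separately, leveraging the cut-and-project (CPS) machinery of \cite{NS11, NS12} and the completeness results proved earlier in the paper.

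For part (i), first write $\mu = \sum_{x \in S} c_x\, \delta_x$ with $S := \supp(\mu)$. Translation boundedness of $\mu$ forces $S$ to be uniformly discrete and $(c_x)_{x \in S}$ bounded. The crux is to show that $S$ is a Meyer set, that is, $S - S$ is uniformly discrete. To this end, choose a precompact open $U$ small enough that every translate $x + U$ meets $S$ in at most one point, and fix $\eps > 0$ smaller than the infimum of $|c_x|$ on some compact slab. Then any $t \in P_\eps^U(\mu)$ must carry atoms of $\mu$ to near-atoms with nearly matching weights, and standard Meyer-set technology then extracts uniform discreteness of $S - S$. Applying the characterisation of Meyer sets as subsets of model sets, embed $S \subseteq \Lambda(W)$ for some CPS $(G, H, \cL)$ and precompact window $W \subseteq H$. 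Define $h$ on the projection of $\cL$ into $H$ by $h(x^\star) := c_x$; norm almost periodicity, reread through the CPS, gives uniform continuity of $h$ on this projection, while uniform discreteness of $S$ yields decay at infinity. Denseness of the projection in $H$ then extends $h$ to an element of $\Cz(H)$ with $\mu = \omega_h$.

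For part (ii), start with $h \in \mathcal{S}(\R^d)$. Translation boundedness of $\omega_h$ follows from the Schwartz decay and the finite covolume of $\cL$. Approximate $h$ in a weighted supremum norm by $h_n \in \Cc(\R^d)$, with the approximation fast enough that
\[
\| \omega_h - \omega_{h_n} \|_U \longrightarrow 0 \,.
\]
Each $\omega_{h_n}$ is a weighted model measure with compactly supported window, and norm almost periodicity of such a measure follows by a direct argument: denseness of the projection of $\cL$ into $\R^d$ produces, for each $\delta > 0$, a relatively dense set of shifts in $G$ whose star-images are $\delta$-close to $0$ on a neighbourhood of $\supp(h_n)$, and combined with the uniform continuity of $h_n$, these are $\eps$-almost periods of $\omega_{h_n}$. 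The completeness of $\NAP(G)$ under $\| \cdot \|_U$ established above then yields $\omega_h \in \NAP(G)$.

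The main obstacle is the Meyer-set step in (i): the supremum built into $\| \mu - T_t\mu \|_U$ mixes contributions from distinct atoms of $\mu$, so reading the Meyer property off from norm almost periods is not mechanical. The key technical move is to shrink $U$ so that it separates the atoms of $\mu$, which converts the measure-norm estimate into simultaneous displacement and weight information at a single atom; at that point standard Meyer-set arguments close the loop, and the remainder of (i) together with (ii) proceeds along CPS lines.
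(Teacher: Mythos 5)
The paper itself gives no proof of Theorem~\ref{t1}; it is imported wholesale from \cite{NS11,NS12}, so your proposal must stand on its own, and part (i) has a fatal gap at its very first step. Translation boundedness of a pure point measure does \emph{not} force its support to be uniformly discrete: it only bounds $|\mu|(t+K)$, and a compact set may contain infinitely many atoms with summable weights. For instance, $\mu=\delta_{\Z}*\nu$ with $\nu=\sum_{k\geqslant 1}2^{-k}\delta_{1/k}$ is pure point, translation bounded and $\Z$-periodic (hence norm almost periodic), yet its atoms accumulate at every integer. More tellingly, the conclusion of the theorem itself rules out your route: for $h\in\Cz(H)$ without compact support, the set $\{x\ |\ h(x^\star)\neq 0\}$ is typically \emph{dense} in $G$, since the projection of $\cL$ to $G$ is dense in any nontrivial cut and project scheme. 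So the support of a pure point norm almost periodic measure is in general not a Meyer set, no choice of small $U$ separates the atoms, and the device ``fix $\eps>0$ smaller than the infimum of $|c_x|$'' collapses because $c_x=h(x^\star)$ with $x^\star$ dense in $H$ and $h$ vanishing at infinity gives $\inf|c_x|=0$ already over compact slabs. The actual proof in \cite{NS11} runs in the opposite direction: one starts not from the support but from the sets $P^{U}_{\eps}(\mu)$ of $\eps$-almost periods themselves; these generate a precompact group topology on the subgroup of $G$ they span, $H$ is the Hausdorff completion of that topological group, $\cL$ is the graph of the completion map, and $h$ is the extension by uniform continuity of $x\mapsto\mu(\{x\})$. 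That construction is the essential content of part (i) and is entirely absent from your sketch.

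Part (ii) is essentially the standard argument and is sound in outline: truncate $h$ to $h_n\in\Cc(\R^d)$, use a lattice-cell estimate of the form $\|\omega_g\|_U\leqslant C\sum_{m\in\Z^d}\sup_{y\in m+[0,1]^d}|g(y)|$ (finite, and tending to $0$ for $g=h-h_n$ by Schwartz decay) to get $\|\omega_h-\omega_{h_n}\|_U\to 0$, invoke the result of \cite{bm,NS11} that $\omega_{h_n}$ is norm almost periodic for continuous compactly supported windows --- your argument via shifts $t$ whose star-images lie in $B_\delta(0)$ together with uniform continuity of $h_n$ is the right one --- and close with the completeness of $(\NAP(G),\|\cdot\|_U)$ established in this paper. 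You should, however, make the lattice-cell estimate explicit, since the convergence $\|\omega_h-\omega_{h_n}\|_U\to 0$ is precisely where the Schwartz hypothesis enters, and it does not follow from convergence of $h_n$ to $h$ in a plain supremum norm.
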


This allows us to construct many examples of such measures.

\subsection{Singular continuous norm almost periodic measures}

Unfortunately, we don't have a good understanding of norm almost periodic singular continuous measures.

It is easy to construct examples of such measures. Indeed, pick any pure point norm almost periodic measure $\omega$, which can be constructed by the method of Theorem~\ref{t1}. Let $\nu$ be any finite singular continuous measure. Then $\omega*\nu$ is a singular continuous measure which is norm almost periodic by Proposition~\ref{p2}.

If $\omega$ is positive and has dense support, which can easily be assured, and $\nu$ is positive, then $\omega*\nu$ has dense support.

One another hand, picking $\omega=\delta_\Z$ and $\nu$ a singular continuous measure with Cantor set support, then $\omega*\nu$ does not have dense support.

\bigskip

Recall that if the sets of norm almost periods of $\mu$ are locally finite, for $\eps$ small enough, then they are model sets in the same CPS. While this seems to be the case for many norm almost periodic singular continuous measures, it is not always true. Indeed, $\delta_\Z \otimes \lambda$ is norm almost periodic and singular continuous, but the sets of almost periods contain $\Z \times \R$.

\section{Diffraction of measures with Meyer set support}

In this section, we look at the consequences of the previous sections for the diffraction of measures with Meyer set support. For an overview of cut and project schemes and Meyer sets, and their properties, we recommend the monographs \cite{TAO,TAO2} as well as \cite{LR,Meyer,MOO,CR,CRS,NS1,NS5,NS11,NS12}.

Let us start by recalling the following result.

\begin{theorem} \cite{NS12} Let $\mu$ be any Fourier transformable measure supported inside a Meyer set. Then, each of $(\widehat{\mu})_{\text{pp}}, (\widehat{\mu})_{\text{ac}}, (\widehat{\mu})_{\text{sc}}$ is a norm almost periodic measure.
\end{theorem}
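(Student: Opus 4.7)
The plan is to reduce the theorem to two ingredients already at our disposal: (a) the statement that, for any Fourier transformable measure $\mu$ supported in a Meyer set, the Fourier transform $\widehat{\mu}$ is itself norm almost periodic, and (b) Corollary~\ref{coro:2}, which says that norm almost periodicity is inherited by each component of the Lebesgue decomposition. The combination of (a) and (b) immediately yields the theorem, so the real work is recalling and assembling (a).

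For (a), I would proceed as follows. Let $\Lambda$ be a Meyer set containing $\supp(\mu)$. Then $\Lambda - \Lambda$ is again a Meyer set, so $\mu * \widetilde{\mu}$ (and more generally $\mu *f$ for any $f \in \cF_U$ built from translates of $\Lambda$) is supported in a Meyer set. By the pure point diffraction results cited in the introduction (\cite{bm,NS11,NS12}), such convolutions are strongly almost periodic; and for measures with Meyer set support, strong almost periodicity is equivalent to norm almost periodicity by \cite{bm,NS11}. One then transfers this to $\widehat{\mu}$ via the Fourier transform: the norm almost periodicity on the ``real'' side corresponds, through the intertwining of convolution and multiplication under $\widehat{\ }$, to the property that $\widehat{\mu}$ is norm almost periodic on $\widehat{G}$. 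This is exactly the content of the relevant result in \cite{NS12}, which I would invoke as a black box.

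Having established that $\widehat{\mu} \in \NAP(\widehat{G})$, the conclusion is a one-line application of Corollary~\ref{coro:2}: the pure point, absolutely continuous, and singular continuous parts of any norm almost periodic measure are themselves norm almost periodic. Applying this with $\widehat{\mu}$ in place of $\mu$ gives that each of $(\widehat{\mu})_{\text{pp}}$, $(\widehat{\mu})_{\text{ac}}$, $(\widehat{\mu})_{\text{sc}}$ belongs to $\NAP(\widehat{G})$.

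The delicate step is really (a), not (b). Note that if one only knew that $\widehat{\mu}$ was strongly almost periodic, one could \emph{not} conclude the statement, since, as the examples in Section~\ref{sap leb} demonstrate, the Lebesgue decomposition of a strongly almost periodic measure need not preserve strong almost periodicity. What makes the argument work is precisely that the Meyer set support hypothesis forces the stronger ``norm'' version on $\widehat{\mu}$, so that Corollary~\ref{coro:2} rather than a weaker SAP-analogue is applicable.
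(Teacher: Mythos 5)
The paper offers no proof of this statement at all --- it is imported verbatim from \cite{NS12} --- so the question is whether your proposed reduction is sound. Your step (b) is fine: \emph{if} one knew $\widehat{\mu}\in\NAP(\widehat{G})$, Corollary~\ref{coro:2} would immediately give the claim, and you are right that the corresponding SAP statement would fail (Section~\ref{sap leb}). The gap is entirely in step (a), and your sketch of it does not hold up. First, $\mu*\widetilde{\mu}$ does not exist for an unbounded measure $\mu$; what exists is the volume-averaged autocorrelation, and in any case the theorem concerns the Fourier transform of $\mu$ itself, not the diffraction. Second, the assertion that a convolution supported inside a Meyer set is automatically strongly almost periodic is false: Meyer set support yields the \emph{equivalence} of strong and norm almost periodicity, not membership in either class (e.g.\ $\delta_0$ is supported inside the Meyer set $\Z$ and is neither). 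Third, there is no principle by which norm almost periodicity ``transfers through the Fourier transform'': the transform intertwines translation with modulation, and the actual correspondence in this circle of ideas is that strong almost periodicity of a positive definite measure corresponds to \emph{pure-pointedness} of its transform, a completely different property. Finally, invoking \cite{NS12} as a black box for statement (a) is circular in spirit: what this paper quotes from \cite{NS12} is precisely the component statement you are asked to prove, and the content of (a) is not easier than that.

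Statement (a) is in fact true, but the only available route to it is the structure theorem of \cite{NS12}, which already gives the components directly and makes Corollary~\ref{coro:2} unnecessary. As the Remark following the Corollary records, for $\Lambda$ Meyer one produces a cut and project scheme $(\widehat{G},H,\cL)$ and a sufficiently regular $h$ so that $\widehat{\mu}=\omega_h*\rho$ with $\omega_h$ a norm almost periodic model-set comb (Theorem~\ref{t1}) and $\rho$ a \emph{finite} measure; the Lebesgue components of $\widehat{\mu}$ are then $\omega_h*\rho_{\text{pp}}$, $\omega_h*(f_1\,\theta_{\widehat{G}})$ and $\omega_h*\nu$ with each second factor finite, and Proposition~\ref{p2} yields their norm almost periodicity one component at a time. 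If you wish to keep your two-step outline, you must prove (a) by exactly this argument, at which point step (b) buys you nothing.
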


As a consequence, we can state the next corollary.

\begin{coro}  Let $\mu$ be any Fourier transformable measure supported inside a Meyer set.
\begin{enumerate}
\item[(i)] There exists some CPS $(\widehat{G}, H, \cL)$ and some $h \in \Cz(H)$ such that
\begin{displaymath}
(\widehat{\mu})_{pp} =\omega_h \,.
\end{displaymath}
\item[(ii)] There exists an $L^1$-Stepanov almost periodic function $f$ such that
\begin{displaymath}
 (\widehat{\mu})_{ac}=f\, \theta_{\widehat{G}} \,.
\end{displaymath}
\end{enumerate}
\end{coro}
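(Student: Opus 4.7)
The plan is to observe that this corollary is essentially a direct combination of the cited theorem from \cite{NS12} (which gives norm almost periodicity of each spectral component of $\widehat{\mu}$) with the two structural results already established in this paper: Theorem~\ref{t1}(i) for pure point norm almost periodic measures and Theorem~\ref{T1} for absolutely continuous norm almost periodic measures. So the proof will amount to invoking these three results in sequence.

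For part (i), I would first apply the preceding theorem to deduce that $(\widehat{\mu})_{\text{pp}}$ is a norm almost periodic measure. Since it is pure point by construction, it lies in $\NAP_{\text{pp}}(\widehat{G})$. Now the characterisation in Theorem~\ref{t1}(i) (working in the group $\widehat{G}$ rather than $G$) produces a CPS $(\widehat{G}, H, \cL)$ and a function $h \in \Cz(H)$ with $(\widehat{\mu})_{\text{pp}} = \omega_h$, which is exactly the claim.

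For part (ii), I would again invoke the cited theorem to get that $(\widehat{\mu})_{\text{ac}}$ is norm almost periodic, hence belongs to $\NAP_{\text{ac}}(\widehat{G})$. Since $(\widehat{\mu})_{\text{ac}}$ is absolutely continuous and translation bounded, there exists $f \in L^1_{\text{loc}}(\widehat{G})$ with $(\widehat{\mu})_{\text{ac}} = f\, \theta_{\widehat{G}}$. Applying Theorem~\ref{T1} to this representation then yields that $f$ is $L^1$-Stepanov almost periodic, which completes the proof.

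There is no genuine obstacle here; the only point requiring a small remark is that Theorem~\ref{t1} and Theorem~\ref{T1} are stated for a generic second countable LCAG $G$, and they are being applied with $\widehat{G}$ in the role of $G$. Since $\widehat{G}$ is also a second countable LCAG, both results apply verbatim, so the corollary follows immediately by assembling the three ingredients.
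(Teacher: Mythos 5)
Your proposal is correct and matches the paper's intended argument exactly: the paper states this corollary as an immediate consequence of the preceding theorem (norm almost periodicity of each spectral component of $\widehat{\mu}$) combined with Theorem~\ref{t1}(i) and Theorem~\ref{T1}, applied with $\widehat{G}$ in the role of $G$. Nothing further is needed.
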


\begin{remark} It follows from \cite{NS12} that there exists a CPS $(\widehat{G}, H, \cL)$ and some function $h \in \Cz(H)$ such that
\begin{displaymath}
  f  = \omega_h *f_1  \quad \text{ and }\quad
  (\widehat{\mu})_{\text{sc}}  = \omega_h* \nu
\end{displaymath}
where $f$ is the Radon--Nikodym density of the absolutely continuous part $(\widehat{\mu})_{\text{ac}}$, $f_1 \in L^1(\widehat{G})$ and $\nu$ is a finite singular continuous measure.\exend
\end{remark}

\begin{remark} Each of the examples of compatible random substitutions in one dimension covered in \cite{BSS} is a Meyer set with mixed pure point and absolutely continuous spectrum.

It follows from the general theory that there exists some CPS $(\widehat{\R}, H, \cL)$, some $h \in \Cz(H)$ and an $L^1$-Stepanov almost periodic function $f$ such that
\begin{displaymath}
\widehat{\gamma} =\underbrace{\omega_h}_{\mbox{pp}} + \underbrace{f\, \lm}_{\mbox{ac}} \,.
\end{displaymath}
Explicit formulas for both parts are provided in \cite{BSS}.\exend
\end{remark}

 \subsection*{Acknowledgments}  The work was supported by NSERC with grant 03762-2014 and by DFG, and the authors are grateful for the support.

\end{document}